\theoremstyle{plain}
\newtheorem{theorem}{Theorem}[section]
\newtheorem{corollary}[theorem]{Corollary}
\newtheorem{lemma}[theorem]{Lemma}
\newtheorem{proposition}[theorem]{Proposition}
\theoremstyle{definition}
\theoremstyle{definition}
\newtheorem{remark}[theorem]{Remark}
\newtheorem{example}[theorem]{Example}  
\definecolor{see}{HTML}{0000AA}
\definecolor{doubt}{HTML}{AF0000}
\definecolor{note}{HTML}{600000}
\definecolor{theo1}{HTML}{0000D0}
\definecolor{theo2}{HTML}{000060}
\definecolor{defi1}{HTML}{006000}
\definecolor{defi2}{HTML}{008800}
\newcommand{\C}{\mathcal{C}}
\newcommand\diff{\mathop{}\!\mathrm{d}}
\newcommand\e{\mathop{}\!\mathrm{e}}
\newcommand{\R}{\mathbb{R}}
\newcommand{\E}{\mathbb{E}}
\renewcommand{\P}{\mathbb{P}}
\newcommand{\N}{\mathbb{N}}
\newcommand{\Z}{\mathbb{Z}}
\renewcommand{\l}{\left}
\renewcommand{\r}{\right}
\newcommand{\NN}{\mathbb{N}}
\newcommand{\RR}{\mathbb{R}}
\newcommand{\PP}{\mathbb{P}}
\newcommand{\EE}{\mathbb{E}}
\newcommand{\cC}{\mathcal{C}}
\newcommand{\bone}{\mathds{1}}
\begin{document}
	\title{On $q$-scale functions of spectrally negative compound Poisson processes} 
	\author{Anita Behme\thanks{Technische Universit\"at
			Dresden, Institut f\"ur Mathematische Stochastik, Zellescher Weg 12-14, 01069 Dresden, Germany, \texttt{anita.behme@tu-dresden.de} and \texttt{david.oechsler@tu-dresden.de}, phone: +49-351-463-32425, fax:  +49-351-463-37251.}\; and David Oechsler$^\ast$}
	\date{\today}
	\maketitle
	
	\vspace{-1cm}
	\begin{abstract}
		Scale functions play a central role in the fluctuation theory of spectrally negative Lévy processes. For spectrally negative compound Poisson processes with positive drift, a new representation of the $q$-scale functions in terms of the characteristics of the process is derived. Moreover, similar representations of the derivatives and the primitives of the $q$-scale functions are presented. The obtained formulae for the derivatives allow for a complete exposure of the smoothness properties of the considered $q$-scale functions. Some explicit examples of $q$-scale functions are given for illustration.
	\end{abstract}
	
	2020 {\sl Mathematics subject classification.} 60G51, 60J45 (primary), 91B05 (secondary)\\
	
	{\sl Keywords:} compound Poisson process; q-scale functions; spectrally one-sided Lévy process; two-sided exit problem

	\section{Introduction}\label{S0}
	\setcounter{equation}{0}

	Among Lévy processes those with spectrally negative jump measures play a special role in applied mathematics. On the one hand this is due to the fact that for many real life scenarios in queuing, risk, finance, etc. it is intrinsically reasonable to allow only jumps in one direction. 	On the other hand spectrally negative Lévy processes offer mathematical advantages when considering exit problems and the related \(q\)-scale functions. Recall that for a spectrally negative Lévy process \((L_t)_{t\geq 0}\) with Laplace exponent \(\psi\) and for \(q\geq0\) the \(q\)-scale function \(W^{(q)}:\R_+\to\R\) is defined as the unique function such that
	\begin{equation}\label{eq-def-Wq}
	\int_0^\infty \e^{-\beta x} W^{(q)}(x) \diff x = \frac1{\psi(\beta)-q}
	\end{equation}
	holds for all \(\beta>\sup\{y:\psi(y)=q\}=:\Phi(q)\). These  \(q\)-scale functions get their name from the related scale functions of regular diffusions and indeed, just as for their counterparts, many fluctuation identities of spectrally one-sided Lévy processes may be expressed in terms of \(q\)-scale functions, see e.g. \cite{Bertoin97, emery, rogers1, rogers2}, or \cite{doney, kuznetsov2011theory} and references therein.
	
	However, being defined as an inverse Laplace transform,  analytic or even  closed-form expressions of $q$-scale functions are available only in exceptional cases; see e.g. \cite{hubalek2010} for a collection of such cases. 
	Although there are practical ways to evaluate \(q\)-scale functions numerically as exposed e.g. in \cite{kuznetsov2011theory}, or to approximate them as done e.g. in \cite{Egami2010PhasetypeFO}, it is not only of theoretical interest to expand the library of cases with analytic expressions. As they are intermediate objects in the sense that we are not only interested in \(q\)-scale functions per se, but merely in expressions derived from them, evaluating \(q\)-scale functions numerically both hinders error analysis and amplifies error propagation, e.g. when the jump measure must be estimated which is basically the case in most applications. 
	
	In this article, after setting the stage with some preliminaries given in Section \ref{S2}, in Section \ref{S3} we provide and prove our main Theorem \ref{main}. This states that the \(q\)-scale functions of a spectrally negative compound Poisson process with drift $c >0$, intensity $\lambda>0$ and jump distribution $\Pi(-\diff s)$ can be represented as
		\begin{align}\label{mainform}
	W^{(q)}(x)=\frac1c \sum_{k=0}^{\infty} \int_{[0,x]} g_k(s,x) ~\Pi^{*k}(\diff s), \quad x\geq 0,
	\end{align}
	where 
	\begin{equation}\label{g}
	g_k(s,x)=\frac{(\lambda/c)^k }{k!}\e^{-\frac{q+\lambda}{c}(s-x)}(s-x)^k,\quad s, x\geq 0,
	\end{equation} 
	and $\Pi^{\ast k}$ denotes the $k$-th convolution power of $\Pi$.
	Note that \eqref{mainform} breaks down into a closed-form expression if the jump measure is discrete. In that particular case our result may be regarded as a generalization of the \(0\)-scale function
	\begin{align}\label{Erlang}
	W^{(0)}(x)=\frac{1}{c}\sum_{s=0}^{\lfloor x \rfloor} \frac{\e^{-\frac\lambda c (s-x)} }{s!}\l(\frac{\lambda}{c}\r)^s\l(s-x\r)^s, \quad x\geq 0,
	\end{align}
	of \(L_t=c t - N_t\), $t\geq 0$, where \(c>0\) and \(N_t\) is a Poisson point process with intensity \(\lambda>0\). Formula \eqref{Erlang} can be found in \cite{hubalek2010} or \cite{asmussenalbrecher} and dates all the way back to Erlang \cite{Erlang1909}, even though the notions of neither scale functions nor Lévy processes were present then. \\
	At first glance the representation \eqref{mainform} resembles the form of the $0$-scale function one obtains via the so-called Pollaczeck-Khintchine formula (cf. \cite[IV (2.2)]{asmussenalbrecher}, \cite[Thm. 1.8]{Kyprianou2014} or \cite[Thm. 1.3]{KyprianouGS}), a prominent statement in actuarial mathematics. Namely, as long as the jumps $\{\xi_i,i\in \NN\}$ of the considered compound Poisson process have finite mean $\mu$ such that $\lambda \mu>c$, one can show that (see e.g. \cite[Eq. (4.12)]{KyprianouGS})
	\begin{equation*}
	W(x)= \frac{1}{c} \sum_{k=0}^\infty \l( \frac{\lambda \mu}{c} \r)^k \Pi_I^{\ast k}(x), \quad x\geq 0,
	\end{equation*}
	where $\Pi_I(\diff x)=\mu^{-1}\Pi((x,\infty)) \diff x$ is the integrated tail function of the jump distribution. However, besides of being valid for all $q\geq 0$, \eqref{mainform} does not need an assumption like $\lambda \mu>c$. Moreover it relies on convolutions of the jump distribution itself, instead of its integrated tail function.
	
	For many purposes it is essential to know not only the \(q\)-scale functions \(W^{(q)}\) but also their primitives and - in case they exist - their derivatives, see  e.g. \cite[Sec. 1]{kuznetsov2011theory} for a collection of applications that stem from various areas of applied probability theory. 
	The question of smoothness is of theoretical interest as well (cf. \cite{chankypsavov, kyprivsong} and \cite[Sec. 3.5]{kuznetsov2011theory} for more on that topic) as one can interpret \(W^{(q)}\) as an eigen-function of the infinitesimal generator \(\mathcal{A}\) of \((L_t)_{t\geq0}\), i.e.
	\[
	(\mathcal{A}-q)W^{(q)} = 0
	\]
	in some sense. However, this equation is to be treated cautiously as it is not clear whether \(W^{(q)}\) is in the domain of \(\mathcal{A}\) or not. In Section \ref{S41} we present representation formulae for all derivatives of $W^{(q)}$ possible. These immediately confirm Doney's conjecture  \cite[Conj. 3.13]{kuznetsov2011theory} which states in our setting (i.e. for spectrally negative compound Poisson processes ) that
	\begin{align}\label{equivalence}
	W^{(q)}\in\C^{n+1}(0,\infty) \Leftrightarrow \overline{\Pi}\in\C^{n}(0,\infty)
	\end{align}
	for \(n\in\N\), where \(\overline{\Pi}(x)=\Pi((-\infty, x])\) is the cumulative distribution function of \(\Pi\).  Moreover, we show that for absolutely continuous measures $\Pi$ the equivalence \eqref{equivalence} is a local property, i.e. \( (W^{(q)})^{(n+1)}(x_0)\) exists (and is continuous) for \(x_0\in\R_+\) if and only if \(\overline{\Pi}^{(n)}(x_0)\) exists (and is continuous). With this we slightly improve the corresponding result \cite[Thm. 3]{chankypsavov}. \newline
	A representation formula for the primitive of \(W^{(q)}\)  is derived in Section \ref{S42}, while in Section \ref{S43} we shall rewrite our representations of $W^{(q)}$, its first derivative, and its primitive in the form of $\EE^x[f_{q,\lambda, c}(L_1, N_1)]$ for appropriate functions $f_{q,\lambda, c}$ and with $N_1$ being the number of jumps of $(L_t)_{t\geq 0}$ up to time $1$. 
	
	The final Section \ref{S5} is devoted to some new examples of explicitely computable q-scale functions.

	\section{Preliminaries} \label{S2}
	\setcounter{equation}{0}
	
	Throughout the paper $(L_t)_{t\geq 0}$ denotes a spectrally negative Lévy process with Laplace exponent
	\begin{equation}\label{eq-laplaceexp}
\psi(u)= \log\EE[\e^{uL_1}] = \tilde{c} u + \frac12 \sigma^2 u^2 + \int_{(-\infty,0)} (\e^{ux}-1- ux\mathds{1}_{x> -1})\tilde{\Pi}(\diff x),
\end{equation}
with $\tilde{c}\in\RR$, $\sigma^2\geq 0$ and a Lévy measure $\tilde{\Pi}$ concentrated on $(-\infty,0)$.

We first note that rescaling the given Lévy process corresponds to a rescaling of the associated $q$-scale functions, as shown by the following lemma, which follows immediately from the definition of the $q$-scale functions in \eqref{eq-def-Wq} and the form of the Laplace exponent \eqref{eq-laplaceexp}.

\begin{lemma}\label{scaling}
	Let \(\varepsilon>0\). For any \(q\geq0\) the \(q\)-scale function of \((L_t)_{t\geq 0}\) is given by
	\[
	W^{(q)}(x)=\frac1\varepsilon \hat{W}^{(q)} \l(\frac x\varepsilon\r), \quad x\geq 0,
	\]
	where \(\hat{W}^{(q)}\) is the \(q\)-scale function of the Lévy process $(\hat{L}_t)_{t\geq 0}$ given by \(\hat{L}_t:=\frac1\varepsilon L_t,~t\geq0\).
\end{lemma}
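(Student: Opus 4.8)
The plan is to verify directly that the candidate function $\frac1\varepsilon\hat{W}^{(q)}(\cdot/\varepsilon)$ has exactly the Laplace transform prescribed by the defining relation \eqref{eq-def-Wq}, and then to invoke the uniqueness built into that definition. First I would record how the Laplace exponent behaves under the rescaling. Since $\hat{L}_1=\frac1\varepsilon L_1$, we have
\[
\hat\psi(u)=\log\EE[\e^{u\hat{L}_1}]=\log\EE[\e^{(u/\varepsilon)L_1}]=\psi\l(\tfrac u\varepsilon\r),
\]
so that $\hat\psi$ is simply $\psi$ with a rescaled argument; this is immediate from the form \eqref{eq-laplaceexp}.

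Next I would compute the Laplace transform of the proposed scale function. Substituting $y=x/\varepsilon$ and using $\diff x=\varepsilon\,\diff y$,
\[
\int_0^\infty \e^{-\beta x}\,\frac1\varepsilon\hat{W}^{(q)}\l(\frac x\varepsilon\r)\diff x
=\int_0^\infty \e^{-\beta\varepsilon y}\,\hat{W}^{(q)}(y)\,\diff y
=\frac1{\hat\psi(\beta\varepsilon)-q}
=\frac1{\psi(\beta)-q},
\]
where the middle equality is the defining relation \eqref{eq-def-Wq} applied to $\hat{W}^{(q)}$ and the last uses the exponent identity above. This is precisely the right-hand side of \eqref{eq-def-Wq}, so the candidate function carries the correct transform.

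It remains to match the ranges of $\beta$ on which the two transforms are valid. The only point requiring a moment's care is the transformation of the threshold $\Phi(q)$: from $\hat\psi(y)=\psi(y/\varepsilon)$ one obtains $\hat\Phi(q)=\sup\{y:\psi(y/\varepsilon)=q\}=\varepsilon\,\Phi(q)$, so the condition $\beta>\Phi(q)$ in \eqref{eq-def-Wq} is exactly equivalent to $\beta\varepsilon>\hat\Phi(q)$, which is the range on which the defining relation for $\hat{W}^{(q)}$ holds. With both sides agreeing as Laplace transforms on the appropriate half-line, the uniqueness of the $q$-scale function furnished by \eqref{eq-def-Wq} forces $W^{(q)}(x)=\frac1\varepsilon\hat{W}^{(q)}(x/\varepsilon)$.

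I do not anticipate any genuine obstacle here: the argument reduces to a single change of variables together with the bookkeeping of the abscissa of convergence, exactly as the remark preceding the lemma suggests. The only thing to keep honest is the threshold computation $\hat\Phi(q)=\varepsilon\,\Phi(q)$, which ensures that the verification of the transform takes place on the legitimate domain and that the appeal to uniqueness is justified.
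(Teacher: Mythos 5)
Your argument is correct and is precisely the computation the paper has in mind when it says the lemma ``follows immediately from the definition of the $q$-scale functions in \eqref{eq-def-Wq} and the form of the Laplace exponent'': the substitution $y=x/\varepsilon$ combined with $\hat\psi(u)=\psi(u/\varepsilon)$ and the threshold identity $\hat\Phi(q)=\varepsilon\,\Phi(q)$ is the whole proof. The paper omits these details entirely, so you have simply written out what it leaves implicit.
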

	
In a similar manner the next lemma allows to adjust the drift by proper rescaling of the scale functions. It is an immediate consequence of the definition of the $q$-scale function in \eqref{eq-def-Wq} and the fact that $\log \EE[\e^{uL_t}]=t\psi(u)$.
	\begin{lemma}\label{scaling1}
		Assume $\tilde{c}> 0$. Then for any \(q\geq0\) the \(q\)-scale function of \((L_t)_{t\geq 0}\) is given by
		\[
		W^{(q)}(x)= \frac{1}{\tilde{c}} \hat{W}^{( q/\tilde{c})}(x),\quad x\geq 0,
		\]
		where \( \hat{W}^{(q/\tilde{c})}(x), x\geq 0, \) is the \(q/\tilde{c}\)-scale function of the subordinated process $(\hat{L}_t)_{t\geq 0}$ given by \(\hat L_t := L_{t/\tilde{c}},~t\geq0\).
	\end{lemma}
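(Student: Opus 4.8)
The plan is to verify the claimed identity directly from the defining relation \eqref{eq-def-Wq} by showing that the proposed rescaling sends the Laplace transform of $W^{(q)}$ to the Laplace transform of the rescaled scale function. First I would record how the Laplace exponent transforms under the time-change $\hat L_t := L_{t/\tilde c}$. Since $\log\EE[\e^{u L_t}] = t\,\psi(u)$, we have $\log\EE[\e^{u\hat L_t}] = \log\EE[\e^{u L_{t/\tilde c}}] = (t/\tilde c)\,\psi(u)$, so the Laplace exponent of $(\hat L_t)_{t\geq 0}$ is $\hat\psi(u) = \psi(u)/\tilde c$. This is the crux of the argument, and it is immediate from the stated scaling of cumulants in $t$.

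With $\hat\psi = \psi/\tilde c$ in hand, I would apply the definition \eqref{eq-def-Wq} to the process $(\hat L_t)_{t\geq 0}$ at parameter $q/\tilde c$. By definition $\hat W^{(q/\tilde c)}$ is the unique function whose Laplace transform equals
\[
\frac{1}{\hat\psi(\beta) - q/\tilde c} = \frac{1}{\psi(\beta)/\tilde c - q/\tilde c} = \frac{\tilde c}{\psi(\beta) - q},
\]
valid for $\beta > \sup\{y : \hat\psi(y) = q/\tilde c\} = \sup\{y : \psi(y) = q\} = \Phi(q)$, where the set of solutions is unchanged because dividing $\psi$ by the positive constant $\tilde c$ and adjusting the level by the same factor preserves the zero set. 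Comparing with the Laplace transform of $W^{(q)}$, namely $1/(\psi(\beta) - q)$ on the same range of $\beta$, we read off
\[
\int_0^\infty \e^{-\beta x}\, W^{(q)}(x)\,\diff x = \frac{1}{\psi(\beta) - q} = \frac{1}{\tilde c}\cdot \frac{\tilde c}{\psi(\beta) - q} = \frac{1}{\tilde c}\int_0^\infty \e^{-\beta x}\,\hat W^{(q/\tilde c)}(x)\,\diff x.
\]

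Finally, since both sides are Laplace transforms of functions on $\R_+$ agreeing for all $\beta > \Phi(q)$, uniqueness of the Laplace transform forces $W^{(q)}(x) = \tilde c^{-1}\hat W^{(q/\tilde c)}(x)$ for $x\geq 0$, which is the claim. I do not anticipate a genuine obstacle here: the only point requiring a moment's care is confirming that the threshold $\Phi(q)$ for $W^{(q)}$ and the corresponding threshold $\Phi_{\hat L}(q/\tilde c)$ for $\hat W^{(q/\tilde c)}$ coincide, so that the two Laplace-transform identities are asserted on the same half-line of $\beta$; this follows because $\psi(y) = q \iff \hat\psi(y) = q/\tilde c$ and $\tilde c > 0$ preserves the ordering used in the supremum. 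Everything else is a one-line substitution, consistent with the remark in the excerpt that the lemma is an immediate consequence of \eqref{eq-def-Wq} together with the relation $\log\EE[\e^{uL_t}] = t\psi(u)$.
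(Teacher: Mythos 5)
Your proposal is correct and follows exactly the route the paper indicates: the paper gives no written proof but states the lemma is immediate from \eqref{eq-def-Wq} together with $\log\EE[\e^{uL_t}]=t\psi(u)$, which is precisely the computation you carry out ($\hat\psi=\psi/\tilde c$, hence $1/(\hat\psi(\beta)-q/\tilde c)=\tilde c/(\psi(\beta)-q)$, then uniqueness of the Laplace transform). Your added check that the thresholds $\Phi(q)$ and $\Phi_{\hat L}(q/\tilde c)$ coincide is a worthwhile detail the paper leaves implicit.
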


We are especially interested in spectrally negative compound Poisson processes for which \eqref{eq-laplaceexp} simplifies to 
\begin{equation*}
\psi(u)= \log\EE[\e^{uL_1}] = c u + \lambda \int_{(0,\infty)} (\e^{-ux}-1)\Pi(\diff x),
\end{equation*}
where we call $c$ the \emph{drift}, $\lambda\geq 0$ the \emph{intensity}, and $\Pi(\diff x)$ the \emph{jump measure} describing the negative of the jumps of $(L_t)_{t\geq 0}$. We will thus consider 
\begin{equation}\label{eq-specialtype}
L_t=c t- \sum_{i=1}^{N_t} \xi_i, \quad t\geq 0,
\end{equation}
for a Poisson process $(N_t)_{t\geq 0}$ with intensity $\lambda>0$ and i.i.d. non-negative random variables $\xi_i,i\geq1$ with distribution $\Pi$. In view of Lemma \ref{scaling1} we will often set $c=1$.

Whenever we consider a shifted version of the Lévy process starting in $x\in\RR$ we use the notation $\PP^x$ for the induced probability measure and $\EE^x$ for the corresponding expectation. For $x=0$ we set $\PP:=\PP^0$ and $\EE=\EE^0$. 
We abbreviate the natural numbers including $0$ as $\NN=\{0,1,2,\ldots\}$, and the real half-line as $\RR_+=[0,\infty)$. 
Integrals of the form $\int_a^b$ are to be understood as $\int_{[a,b]}$. Partial derivatives are abbreviated as $\partial_x:=\frac{\partial}{\partial x}$, while $\partial_{x,\pm}$ denote left- and right derivatives with respect to $x$. In case of only one variable, we may omit the $x$ and simply use $\partial$, or $\partial_{\pm}$, for the (directional) derivative. 
Finally, throughout the paper we shall use the standard floor and ceiling functions $\lfloor x \rfloor := \max \{y\in\Z~:~y\leq x \}$ and $\lceil x \rceil := \min \{y\in\Z ~:~ y\geq x\}$.

\section{A representation of \(W^{(q)}\)}\label{S3}
\setcounter{equation}{0}

As mentioned in the introduction, our main result reads as follows.

	\begin{theorem}\label{main}
	For any \(q\geq0\) the \(q\)-scale function of the spectrally negative compound Poisson process in \eqref{eq-specialtype} is given by \eqref{mainform}.
	\end{theorem}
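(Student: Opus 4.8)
The plan is to verify that the function defined by the right-hand side of \eqref{mainform} has Laplace transform $1/(\psi(\beta)-q)$; since by \eqref{eq-def-Wq} the $q$-scale function is the unique function with this property, this identifies it with $W^{(q)}$. Invoking Lemma \ref{scaling1} I may and do assume $c=1$ throughout, which lightens the algebra. The first step is to rewrite the relevant quantity in terms of the Laplace transform $\pi(\beta):=\int_{(0,\infty)}\e^{-\beta s}\,\Pi(\diff s)$ of the jump distribution. From the simplified Laplace exponent one gets directly
\begin{equation*}
\psi(\beta)-q=\beta-(q+\lambda)+\lambda\,\pi(\beta),
\end{equation*}
and, since $\Pi^{\ast k}$ is the law of $\xi_1+\dots+\xi_k$, its Laplace transform is $\int_{[0,\infty)}\e^{-\beta s}\,\Pi^{\ast k}(\diff s)=\pi(\beta)^k$. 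These two identities are the only structural facts about the process that enter the computation.

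Before transforming I would first check that the series in \eqref{mainform} is well defined and locally bounded, which also supplies the integrability needed to interchange sum and integral. Writing $s-x=-(x-s)$ on $[0,x]$, the $k$-th summand is bounded in absolute value by $\tfrac{\lambda^k}{k!}\e^{(q+\lambda)x}x^k\,\Pi^{\ast k}([0,x])$, and because $\Pi^{\ast k}$ is a probability measure one has $\Pi^{\ast k}([0,x])\le 1$. Summing over $k$ yields the bound $\e^{(q+2\lambda)x}<\infty$, so the series converges absolutely and locally uniformly, and $W^{(q)}$ is of exponential order; in particular its Laplace transform exists for $\beta$ large and Tonelli applies to the absolute values.

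The main computation is then the termwise Laplace transform. Using Fubini to exchange $\int_0^\infty \e^{-\beta x}\,\diff x$ with $\int_{[0,x]}\,\Pi^{\ast k}(\diff s)$, and substituting $u=x-s\ge 0$, the inner integral becomes a Gamma integral: with $D:=\beta-(q+\lambda)$ and $\beta>q+\lambda$,
\begin{equation*}
\int_s^\infty \e^{-\beta x}\,g_k(s,x)\,\diff x
=\e^{-\beta s}\,\frac{(-\lambda)^k}{k!}\int_0^\infty u^k\,\e^{-Du}\,\diff u
=\e^{-\beta s}\,\frac{(-\lambda)^k}{D^{k+1}}.
\end{equation*}
Integrating against $\Pi^{\ast k}$ produces the factor $\pi(\beta)^k$, so the Laplace transform of the $k$-th term equals $(-\lambda\,\pi(\beta))^k/D^{k+1}$. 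Summing the resulting geometric series, which converges for $\beta$ large since $\lambda\,\pi(\beta)/|D|\to 0$ as $\beta\to\infty$, gives
\begin{equation*}
\sum_{k=0}^\infty \frac{(-\lambda\,\pi(\beta))^k}{D^{k+1}}
=\frac{1}{D+\lambda\,\pi(\beta)}
=\frac{1}{\beta-(q+\lambda)+\lambda\,\pi(\beta)}
=\frac{1}{\psi(\beta)-q},
\end{equation*}
exactly as required. Finally, the identity is first established only on a half-line $(\beta_0,\infty)$, whereas \eqref{eq-def-Wq} demands it on $(\Phi(q),\infty)$; since both sides are analytic in $\beta$ on $(\Phi(q),\infty)$, agreement on a subinterval extends by analytic continuation, and uniqueness of the inverse Laplace transform concludes the proof.

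The step I expect to be most delicate is controlling the two convergence issues simultaneously: the termwise Laplace transform is valid only for $\beta>q+\lambda$, and the geometric resummation only once $\lambda\,\pi(\beta)/|D|<1$, so some care is needed to fix a range of $\beta$ on which every interchange is legitimate and then to transport the conclusion back to all $\beta>\Phi(q)$ by analyticity. The sign alternation of the summands (the terms are individually signed even though $W^{(q)}\ge 0$) makes the Tonelli bound via $\Pi^{\ast k}([0,x])\le 1$ the crucial ingredient that keeps the manipulations rigorous.
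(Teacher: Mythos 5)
Your proof is correct, but it takes a genuinely different route from the paper. You verify the candidate formula by computing its Laplace transform termwise and matching it against $1/(\psi(\beta)-q)$; the paper instead \emph{derives} the formula: it establishes a recursion for $W^{(q)}$ from the two-sided exit identity $\E^x[\e^{-q\tau_a^+}\bone_{\{\tau_a^+<\tau_0^-\}}]=W^{(q)}(x)/W^{(q)}(a)$ by conditioning on the first jump, solves that recursion explicitly for lattice-distributed jump measures (a combinatorial induction over multi-indices), and then passes to general $\Pi$ by weak approximation with lattice distributions, using an equicontinuity argument to get convergence of the scale functions. Your computation checks out: with $c=1$ the substitution $u=x-s$ gives $\int_s^\infty\e^{-\beta x}g_k(s,x)\,\diff x=\e^{-\beta s}(-\lambda)^k/D^{k+1}$ with $D=\beta-(q+\lambda)$, integrating against $\Pi^{\ast k}$ yields $(-\lambda\pi(\beta))^k/D^{k+1}$, and the geometric sum is $1/(\beta-(q+\lambda)+\lambda\pi(\beta))=1/(\psi(\beta)-q)$; your Tonelli bound via $\Pi^{\ast k}([0,x])\le1$ legitimizes all interchanges for $\beta>q+2\lambda$. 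Two small remarks. First, Laplace-transform uniqueness only gives $V=W^{(q)}$ almost everywhere, so to conclude pointwise equality you should note that your candidate $V$ is continuous (each summand is, since $g_k(x,x)=0$ for $k\ge1$, and the series converges locally uniformly by your bound) and that $W^{(q)}$ is continuous. Second, the appeal to analytic continuation is unnecessary and slightly circular as stated (you do not know a priori that $\mathcal{L}V$ converges on all of $(\Phi(q),\infty)$); agreement of the two Laplace transforms on a common half-line already forces $V=W^{(q)}$ a.e., after which $\mathcal{L}V=1/(\psi(\cdot)-q)$ on all of $(\Phi(q),\infty)$ comes for free. As for what each approach buys: yours is far shorter and self-contained, but presupposes the candidate; the paper's longer construction explains where the formula comes from and produces intermediate results (the recursion, the explicit lattice formula of Proposition 3.5, the equicontinuity lemma) that are reused later for the derivatives, primitives, and examples.
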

	
To prove this theorem in the forthcoming subsections, we choose a dense subclass of processes for which we can explicitly compute the scale functions. The expressions we obtain will then be extended to the general setting of the theorem by approximation arguments. 

\subsection{A recursion formula}

	We start by proving a recursion formula for $q$-scale functions of spectrally negative compound Poisson processes whose jump measures are bounded away from zero.
	
	\begin{lemma}\label{recursivformula1}
		Let \((L_t)_{t\geq 0}\) be a spectrally negative compound Poisson process as in \eqref{eq-specialtype} with drift $c=1$. Let further \(J:=\mathrm{supp}(\Pi) \subset [1,\infty)\) and fix \(q\geq0\). Then the \(q\)-scale function \(W^{(q)}\) of \((L_t)_{t\geq 0}\) fulfils
		\begin{align}\label{recurse}
		W^{(q)}(x) = \e^{(q+\lambda) x} \l( \e^{-(q+\lambda)\lfloor x\rfloor} W^{(q)}( \lfloor x \rfloor) - \lambda\int_J  \int^x_{\lfloor x\rfloor} \e^{-(q+\lambda) z}W^{(q)}(z-y) \diff z ~\Pi(\diff y)\r)
		\end{align}
		for \(x\geq0\), if we set \(W^{(q)}(x)=0\) for \(x<0\). Moreover, for any solution \(V\) of \eqref{recurse} with \(V(x)=0\) for \(x<0\) it holds 
		\begin{align}\label{unique}
		V(x)=V(0)W^{(q)}(x)
		\end{align}
		for all \(x\in\R\).
	\end{lemma}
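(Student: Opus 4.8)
The plan is to split the statement into two largely independent tasks: first, deriving the integral recursion \eqref{recurse} for the scale function $W^{(q)}$ itself, and second, establishing the uniqueness assertion \eqref{unique}. The bridge between the two is a single first-order integro-differential equation (IDE) satisfied by $W^{(q)}$, out of which \eqref{recurse} drops by an elementary integrating-factor computation.

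For the first task I would start from the defining Laplace transform \eqref{eq-def-Wq}. Writing $\psi(\beta)=\beta-\lambda+\lambda\hat\Pi(\beta)$ with $\hat\Pi(\beta)=\int_{(0,\infty)}\e^{-\beta y}\,\Pi(\diff y)$ (recall $c=1$ and that $\Pi$ is a probability measure), the identity $(\psi(\beta)-q)\int_0^\infty\e^{-\beta x}W^{(q)}(x)\,\diff x=1$ becomes, after inverting the Laplace transform termwise and using $W^{(q)}(0)=1$ (which follows from the initial-value theorem, since $\beta\int_0^\infty\e^{-\beta x}W^{(q)}(x)\,\diff x\to1$ as $\beta\to\infty$),
\[ \partial_x W^{(q)}(x)-(q+\lambda)W^{(q)}(x)+\lambda\int_J W^{(q)}(x-y)\,\Pi(\diff y)=0,\qquad x>0, \]
with the convention $W^{(q)}(x)=0$ for $x<0$. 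Multiplying by the integrating factor $\e^{-(q+\lambda)x}$ turns the left-hand side into $\partial_x[\e^{-(q+\lambda)x}W^{(q)}(x)]$; integrating this over $[\lfloor x\rfloor,x]$, rearranging, and swapping the order of integration by Fubini yields exactly \eqref{recurse}. Note that $(\lfloor x\rfloor,x)$ contains no integer, so one integrates across a single interval on which $W^{(q)}$ is smooth, and the possible kinks of $W^{(q)}$ at integers cause no difficulty.

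For the uniqueness assertion I would exploit two features of \eqref{recurse}: it is \emph{linear and homogeneous} in the unknown function, and --- crucially --- since $J\subset[1,\infty)$, for $x\in[n,n+1)$ one has $\lfloor x\rfloor=n$ and every argument $z-y$ occurring in the integral satisfies $z-y<n$. Hence the right-hand side of \eqref{recurse} expresses $V$ on $[n,n+1)$ solely through $V(n)$ and the restriction of $V$ to $(-\infty,n)$. Setting $U:=V-V(0)W^{(q)}$, which again solves \eqref{recurse} by linearity and vanishes on $(-\infty,0)$, I would prove $U\equiv0$ by induction on $n\in\N$: assuming $U=0$ on $(-\infty,n]$, the look-back property just described forces $U=0$ on $[n,n+1)$, and then $U=0$ at $n+1$ by passing to the left limit. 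Since $W^{(q)}(0)=1$, this gives $V=V(0)W^{(q)}$ on all of $\R$.

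The main obstacle is the rigorous justification of the IDE, i.e. that $W^{(q)}$ is sufficiently regular (absolutely continuous on $(0,\infty)$) for the termwise Laplace inversion to be valid; this is where the structural assumption $\supp(\Pi)\subset[1,\infty)$ together with the known continuity and piecewise smoothness of scale functions of compound Poisson processes enters. A secondary, more technical point is the passage to the limit at integer points in the induction, which relies on the continuity of $V$ at the integers; this is automatic for the intended (continuous) solutions, and in particular for $W^{(q)}$ itself.
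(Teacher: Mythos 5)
Your derivation of \eqref{recurse} takes a genuinely different route from the paper. The paper never differentiates $W^{(q)}$: it uses the two-sided exit identity $\E^x[\e^{-q\tau_a^+}\bone_{\{\tau_a^+<\tau_0^-\}}]=W^{(q)}(x)/W^{(q)}(a)$, computes $W^{(q)}(x)=\e^{(q+\lambda)x}$ on $[0,1]$ directly (no jump can occur before exiting $[0,1]$ upward, since $J\subset[1,\infty)$), and for $x>1$ conditions on the first jump time via the strong Markov property to relate $W^{(q)}(x)$ to $W^{(q)}(\lceil x\rceil)$ plus an integral term; a left limit at $\lfloor x\rfloor$ then eliminates $W^{(q)}(\lceil x\rceil)$ and yields \eqref{recurse}. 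The only regularity input is continuity of $W^{(q)}$, which is cited. You instead pass through the integro-differential equation $\partial_x W^{(q)}-(q+\lambda)W^{(q)}+\lambda\,\Pi*W^{(q)}=0$ and an integrating factor; granting the IDE, that computation is correct, and it buys a cleaner and shorter derivation that does not need the exit identity. Your uniqueness argument is essentially the paper's (interval-by-interval induction using that $z-y<\lfloor x\rfloor$ for $y\in J$, $z\le x$), including the same implicit reliance on continuity of $V$ at integers, where \eqref{recurse} itself is vacuous.

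The gap is the IDE. Termwise Laplace inversion of $(\psi(\beta)-q)\mathcal{L}W^{(q)}(\beta)=1$ requires $\beta\,\mathcal{L}W^{(q)}(\beta)-W^{(q)}(0)=\mathcal{L}(\partial W^{(q)})(\beta)$, i.e.\ absolute continuity of $W^{(q)}$ on $(0,\infty)$, and you assert this rather than prove it. It is not supplied by the hypothesis $\supp(\Pi)\subset[1,\infty)$: the known regularity for bounded-variation processes is $C^1$ off the closure of $\bigcup_k\supp(\Pi^{*k})$ (not merely off the integers, as your parenthetical suggests --- for general $\Pi$ on $[1,\infty)$ the kinks of $W^{(q)}$ need not sit on a lattice, and the support can even be uncountable), and continuity plus smoothness off a closed null set does not by itself give absolute continuity. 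One cannot bootstrap from the conclusion either, since using \eqref{recurse} to get regularity before deriving \eqref{recurse} would be circular. The gap is fillable --- e.g.\ by expanding $1/(\psi(\beta)-q)=(c\beta)^{-1}\sum_{k\ge0}\bigl((q+\lambda-\lambda\hat\Pi(\beta))/(c\beta)\bigr)^{k}$ for large $\beta$ and inverting term by term, which exhibits $W^{(q)}$ as a locally uniform limit of absolutely continuous functions with convergent derivatives, or by citing a reference that states the IDE for bounded-variation spectrally negative processes --- but as written the crux of your argument rests on an unestablished step that the paper's probabilistic route is specifically designed to avoid.
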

	
	\begin{proof}
		Note first that, if some function solves \eqref{recurse}, then so does any multiple of it. \\		
		To prove \eqref{recurse}, fix \(q\geq 0\) and let \(a>0\). Define the first passage times
		\begin{align*}
		\tau_a^+&:= \inf\{t>0: L_t\geq a\},\\
		\tau_0^-&:= \inf\{t>0: L_t\leq0\}.
		\end{align*}
		It is well known (cf. \cite[Thm. 1.2]{kuznetsov2011theory}) that
		\begin{equation}\label{scaleviapassage}
		\E^x[\e^{-q\tau_a^+}\bone_{\{\tau_a^+<\tau_0^-\}}] = \frac{W^{(q)}(x)}{W^{(q)}(a)}.
		\end{equation}
		Scale functions are defined on \([0,\infty)\) and in this particular setting we are able to compute them directly on \([0,1]\) due to \(J\subset [1,\infty)\).\\
		To do so, let \(a=1\) and \(x\in(0,1)\) arbitrary. 
		Under \(\P^x\) the event \(\{\tau_1^+<\tau_0^-\}\) is equivalent to the event that no jump occurs until \(T={1-x}\), the time it takes for the pure drift to exit the interval. Thus \(\P^x (\tau_1^+<\tau_0^-)= \e^{\lambda (x-1)}\) due to the number of jumps in an interval \([0,T]\) following a Poisson distribution with parameter \(\lambda T\). In that case \(\tau_1^+={1-x}\) and we find
		\begin{align}
		\E^x[\e^{-q\tau_1^+} \bone_{\{\tau_1^+<\tau_0^-\}}] &=  \e^{ q (x-1)}\P^x (\tau_1^+<\tau_0^-) = \e^{(q+\lambda)(x-1)}.\label{firstinterval}
		\end{align}
		Thus for \(x\in(0,1)\), 
		\[
		W^{(q)}(x) = \e^{(q+\lambda)(x-1)} W^{(q)}(1),
		\]
		and hence, \eqref{recurse} is fulfilled on \((0,1)\) because the integral in \eqref{recurse} vanishes and \(\e^{-(q+\lambda)}W^{(q)}(1)\) is just a constant factor.  Since scale functions are continuous (cf. \cite[Lem. 2.3]{kuznetsov2011theory}) on \([0,\infty)\) the formula holds true for \(x=0\) and \(x=1\) as well. 
		However, from \cite[Lem. 3.1]{kuznetsov2011theory}, we know that \(W^{(q)}(0)=\frac{1}{c}\), and as we agreed on \(c=1\), we have
		\begin{align}\label{on1}
		W^{(q)}(x)=\e^{(q+\lambda)x}, \quad x\in[0,1].
		\end{align}
	
		Clearly, if we prove \eqref{recurse} for \(x\in\R_+\setminus\N\), we may close any gap by continuity. Therefore let \(x\in\R_+\setminus\N\) with \(x>1\).
		As we want to use \eqref{scaleviapassage} again, we are interested in the expression \(\E^x[\e^{-q\tau_{\lceil x\rceil}^+}\bone_{\{\tau_{\lceil x\rceil}^+<\tau_0^-\}}] \) which can be written as
		\begin{equation}\label{onrwithoutn}
		\E^x[\e^{-q\tau_{\lceil x\rceil}^+}\bone_{\{\tau_{\lceil x\rceil}^+<\tau_0^-\}}]  = \E^x[\e^{-q\tau_{\lceil x\rceil}^+}\bone_{\{\tau_{\lceil x\rceil}^+<\sigma\}}]+
		\E^x[\e^{-q\tau_{\lceil x\rceil}^+}\bone_{\{\tau_{\lceil x\rceil}^+>\sigma\}\cap\{L_{\sigma}>0\}\cap\{\hat\tau_{\lceil x\rceil}^+<\hat\tau_{0}^-\}}],
		\end{equation}
		where \(\sigma\) denotes the time of the first jump of $(L_t)_{t\geq 0}$ and 
		\begin{align*}
		\hat\tau^+_{\lceil x\rceil} &:=\inf\{t\geq\sigma: L_t\geq\lceil x\rceil \} - \sigma, \\
		\hat\tau_{0}^- &:= \inf\{t\geq\sigma: L_t\leq0 \} - \sigma
		\end{align*}				
		denote the respective first passage times after the first jump. The first term in \eqref{onrwithoutn} is 
		\[
		\E^x[\e^{-q\tau_{\lceil x\rceil}^+}\bone_{\{\tau_{\lceil x\rceil}^+<\sigma\}}]=\E^{x-\lfloor x\rfloor}[\e^{-q\tau_1^+}\bone_{\{\tau_1^+<\sigma\}}]=\E^{x-\lfloor x\rfloor}[\e^{-q\tau_1^+}\bone_{\{\tau_1^+<\tau^-_0\}}]=
		\e^{(q+\lambda)(x-\lceil x\rceil)}.
		\]
		due to the space homogeneity of Lévy processes, $J\subseteq[1,\infty)$ and formula \eqref{firstinterval}. For the second expression in \eqref{onrwithoutn} note that on \(\{\tau_{\lceil x\rceil}^+>\sigma\}\) it holds \(\tau_{\lceil x\rceil}^+ = \sigma +  \hat\tau_{\lceil x\rceil}^+\) by definition and \(\sigma=\tau_{\lfloor x \rfloor}^-\) since \(J\subset [1,\infty)\). We condition on $\sigma\sim \mathrm{Exp}(\lambda)$ and $\xi_1=-\Delta L_\sigma$ and since under $\PP^x$ we have $L_\sigma=x+\sigma-\xi_1$, we obtain
		\begin{align*}
		\E^x\l[\e^{-q\tau_{\lceil x\rceil}^+}\bone_{\{\tau_{\lceil x\rceil}^+>\sigma\}\cap\{L_{\sigma}>0\}\cap\{\hat\tau_{\lceil x\rceil}^+ < \hat\tau_{0}^-\}}\r]
		&= 	\E^x\l[\E^x\l[\e^{-q\tau_{\lceil x\rceil}^+} \bone_{\{\tau_{\lceil x\rceil}^+>\sigma\}\cap\{L_{\sigma}>0\}\cap\{\hat\tau_{\lceil x\rceil}^+ < \hat\tau_{0}^-\}}\middle| \sigma, \xi_1 \r] \r]\\
		&= \E^x\l[\e^{-q\sigma} \bone_{\{\tau_{\lceil x\rceil}^+>\sigma\}\cap\{L_{\sigma}>0\}} \E^{L_{\sigma}}\l[\e^{-q\hat\tau_{\lceil x\rceil}^+} \bone_{\{\hat\tau_{\lceil x\rceil}^+ <\hat\tau_{0}^-\}}\r] \r] \\
		&= \E^x\l[\e^{-q\sigma} \bone_{\{\tau_{\lceil x\rceil}^+>\sigma\}\cap\{L_{\sigma}>0\}} \frac{W^{(q)}(L_{\sigma})}{W^{(q)}(\lceil x\rceil)} \r] \\
		&= \frac1{W^{(q)}({\lceil x\rceil})} \lambda\int_J \int_0^{{\lceil x\rceil}-x}\e^{-(q+\lambda)t}W^{(q)}(x+ t-y)\diff t ~ \Pi(\diff y).
		\end{align*}
		 Note that in the above the indicator function \(\mathds{1}_{\{L_\sigma>0\}}\) can be omitted due to \(W^{(q)}(x)=0\) if \(x<0\). \newline
		Via \eqref{scaleviapassage} we obtain now by combination of the two terms in \eqref{onrwithoutn} 
		\begin{align*}
		W^{(q)}(x)&= \E^x[\e^{-q\tau_{\lceil x\rceil}^+}\bone_{\{\tau_{\lceil x\rceil}^+<\tau_0^-\}}] \cdot W^{(q)}({\lceil x\rceil})\\
		&=\e^{(q+\lambda)(x-\lceil x\rceil)}W^{(q)}(\lceil x\rceil) +  \lambda\int_J \int_0^{\lceil x\rceil-x}\e^{-(q+\lambda)t}W^{(q)}(x+ t-y)\diff t~ \Pi(\diff y),\\
		&=\e^{(q+\lambda)(x-\lceil x\rceil)}W^{(q)}(\lceil x\rceil) + \lambda\int_J\int_{x}^{\lceil x\rceil}\e^{(q+\lambda)(x-z)}W^{(q)}(z-y)\diff z ~ \Pi(\diff y).\end{align*}
		Recall that \(x\in\R_+\setminus\N\) and choose a decreasing sequence \(\{x_n\}_{n\in\N}\) with $x_n\to\lfloor x\rfloor$ as $n\to\infty$. Taking the limit we obtain
		\[
		W^{(q)}(\lfloor x\rfloor)=\e^{-(q+\lambda)}W^{(q)}({\lceil x\rceil}) + \lambda\int_J \int_{\lfloor x\rfloor}^{\lceil x\rceil}\e^{({q+\lambda})(\lfloor x\rfloor-z)}W^{(q)}(z-y)\diff z~\Pi(\diff y)
		\]
		by continuity. If we now combine the last two formulae we get
		\begin{align*}
		W^{(q)}(x)=\e^{(q+\lambda) x}\l(\e^{-(q+\lambda)\lfloor x\rfloor}W^{(q)}(\lfloor x\rfloor)-\lambda\int_J \int_{\lfloor x\rfloor}^x \e^{-(q+\lambda) z}W^{(q)}(z-y)\diff z ~ \Pi(\diff y)\r),
		\end{align*}
		the desired result. As mentioned above this implies \eqref{recurse} for \(x\in\N\) as well.\\		
		Finally, given a solution $V$ of \eqref{recurse} with $V(x)=0$ for $x<0$, it follows directly from \eqref{recurse}, that $V(x)=e^{(q+\lambda)x} V(0)= W^{(q)}(x) V(0)$ on $[0,1)$. Recursively \eqref{unique} now follows for all $x\in\RR$.
	\end{proof}

	Formula \eqref{recurse} can be solved inductively for lattice distributed jumps. Before doing so in the next section we show a simple corollary of the above which simplifies our notation. 
	
	\begin{corollary}\label{auxiliary}
		In the setting of Lemma \ref{recursivformula1} for all \(q\geq0\) there exist continuous functions \(w_q: \R\to\R\) such that for \(x\in\R\)
		\begin{align}\label{ansatz}
		W^{(q)}(x)=\e^{(q+\lambda) x}w_q(x),
		\end{align}
		and \(w_q\) fulfils the recursive formula
		\begin{align}\label{recurse2}
		w_q(x)=w_q(\lfloor x\rfloor) - \lambda\int_J \e^{-(q+\lambda) y}\int^x_{\lfloor x\rfloor} w_q(z-y) \diff z ~\Pi(\diff y)
		\end{align}
		for \(x\geq0\), while \(w_q(x)=0\) for \(x<0\). In particular
			\begin{equation}\label{recurse_start}
		w_q(x)=1 \quad \text{for } x\in[0,1].
		\end{equation}
	\end{corollary}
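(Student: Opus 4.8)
The plan is to define $w_q$ directly through \eqref{ansatz} and then verify the recursion by substitution. Concretely, I would set $w_q(x) := \e^{-(q+\lambda)x} W^{(q)}(x)$ for $x\geq 0$ and $w_q(x) := 0$ for $x<0$. With this definition \eqref{ansatz} holds by construction on all of $\R$, since for $x<0$ we have $W^{(q)}(x)=0$ by the convention adopted in Lemma \ref{recursivformula1}. Continuity of $w_q$ on $\R_+$ is inherited from the continuity of $W^{(q)}$ on $[0,\infty)$ (cited in the proof of Lemma \ref{recursivformula1}) together with the continuity of the exponential prefactor; on the open half-line $(-\infty,0)$ the function is identically zero and hence trivially continuous. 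The only point to flag is the origin, where the one-sided limits differ, so continuity is to be read on $\R_+$.

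Next I would derive \eqref{recurse2} by inserting the ansatz into the recursion \eqref{recurse}. The crux is a purely algebraic simplification of the exponential factors. Replacing $W^{(q)}(\lfloor x\rfloor)$ by $\e^{(q+\lambda)\lfloor x\rfloor} w_q(\lfloor x\rfloor)$ makes the factor $\e^{-(q+\lambda)\lfloor x\rfloor}$ cancel, leaving $w_q(\lfloor x\rfloor)$; replacing $W^{(q)}(z-y)$ by $\e^{(q+\lambda)(z-y)} w_q(z-y)$ turns the integrand factor $\e^{-(q+\lambda)z}$ into $\e^{-(q+\lambda)y} w_q(z-y)$, so that the dependence on $z$ survives only inside $w_q$. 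After these substitutions the outer factor $\e^{(q+\lambda)x}$ on the right matches the one implicit in $W^{(q)}(x)=\e^{(q+\lambda)x}w_q(x)$ on the left, and dividing through by the nowhere-vanishing $\e^{(q+\lambda)x}$ yields exactly \eqref{recurse2}.

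Finally, the initialization \eqref{recurse_start} follows immediately from \eqref{on1}: since $W^{(q)}(x) = \e^{(q+\lambda)x}$ for $x\in[0,1]$, the definition of $w_q$ gives $w_q(x) = \e^{-(q+\lambda)x}\e^{(q+\lambda)x} = 1$ there. I do not expect a genuine obstacle in this corollary; it is a change of variables that absorbs the exponential growth of $W^{(q)}$ into the prefactor, and the only places demanding a little care are the bookkeeping of the exponents in the integral term and the mild continuity caveat at the origin.
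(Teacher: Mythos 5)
Your proposal is correct and follows exactly the paper's own (very brief) argument: define $w_q$ via the ansatz, substitute into \eqref{recurse} and cancel the exponentials to get \eqref{recurse2}, and read off \eqref{recurse_start} from \eqref{on1}. Your caveat that continuity holds on $\R_+$ rather than across the origin is a fair observation about the statement's phrasing, not a gap in the argument.
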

	\begin{proof}
		We obtain \eqref{recurse2} by plugging \eqref{ansatz} into formula \eqref{recurse}. Formula \eqref{recurse_start} follows from \eqref{on1}.
	\end{proof}

\subsection{Lattice-distributed jumps}

The recursion \eqref{recurse2} becomes particularly easy for lattice-distributed jumps as the outer integral breaks down into a finite sum. Here, by call $\Pi$ a \emph{(positive) lattice distribution}, if $\Pi$ is a discrete probability measure of the form
\[\Pi (\diff x) =\sum_{y\in J}   p_y \delta_y (\diff x)
\]
with \(\sum_{y\in J}  p_y = 1\) and  \(J:=\mathrm{supp}(\Pi)\subset \varepsilon\N\setminus\{0\}\) for some \(\varepsilon>0\). The parameter \(\varepsilon\) is called the \emph{step} of the lattice measure. 

\begin{remark}
Other definitions of lattice measures exist in the literature. A more general lattice would be 
\[
\mathrm{supp}(\Pi)=a+\varepsilon\Z,
\]
for some \(a\in\R\) and \(\varepsilon>0\). Also general lattice distributions need not be finite measures. However, we restrict ourselves to the cases where the definition above applies.
\end{remark}

By Lemma \ref{scaling} and Lemma \ref{scaling1} it suffices to consider \(\varepsilon=1\) and we shall do so for the moment. By Corollary \ref{auxiliary} the functions $w_q$ thus fulfil
\begin{equation}\label{recurse3} \begin{aligned}
w_q(x)&=w_q(\lfloor x\rfloor) -  \lambda\sum_{y\in J}   p_y \e^{-(q+\lambda) y}\int^x_{\lfloor x\rfloor} w_q(z-y) \diff z, \quad x\geq 0,\\
\text{with} \quad w_q(x)&=1, \quad x\in[0,1], \quad \text{and} \quad w_q(x)=0, \quad x<0.
\end{aligned}\end{equation}
We shall use the following slight abuse of notation for the sake of shortness. Let \(v\in\N^J\), i.e. \(v: J \to \N , y\mapsto v_y\) is a function which maps every possible jump height on a non-negative integer. Then we define for any \(v\in\N^J\)
\begin{align*}
|v|:=\sum_{y\in J} v_y, \quad \text{and} \quad 
\langle v, J \rangle := \sum_{y\in J} v_y y.
\end{align*}
We will interpret \(v\) as a vector that keeps track of how many jumps of which size occur. Then \(|v|\) is the total number and \(\langle v,J \rangle\) the accumulated size of all occurring jumps.

\begin{lemma}\label{lattice_explicit}
Assume $\Pi=\sum_{y\in J}   p_y \delta_y $ is a positive lattice distribution with $J\subset \NN\setminus\{0\}$ and set \(m_y={\lambda  p_y} \e^{-(q+\lambda) y}, y\in J\). Then the unique solution \(w_q\) to \eqref{recurse3} is given by
\begin{align}\label{latsol}
w_q(x)= \sum_{\substack{v\in\N^{J} \\ \langle v, J\rangle\leq\lfloor x\rfloor}} \l(\prod_{\ell\in J} \frac{m_\ell^{v_\ell}}{v_\ell !}\r)\Big(\langle v, J\rangle-x\Big)^{|v|},~x\geq0.
\end{align}
\end{lemma}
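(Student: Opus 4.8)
The plan is to exploit the uniqueness that is already in hand: by Lemma \ref{recursivformula1} (through \eqref{unique}) and Corollary \ref{auxiliary}, the recursion \eqref{recurse3} together with the boundary data $w_q\equiv1$ on $[0,1]$ and $w_q\equiv0$ on $(-\infty,0)$ admits exactly one continuous solution. It therefore suffices to verify that the right-hand side of \eqref{latsol}, which I denote by $\tilde w$, satisfies \eqref{recurse3} and the boundary conditions. As a preliminary I would note that each sum in \eqref{latsol} is finite: since every $y\in J$ satisfies $y\ge1$, the constraint $\langle v,J\rangle\le\lfloor x\rfloor$ forces $|v|\le\lfloor x\rfloor$ and $v_y=0$ for $y>\lfloor x\rfloor$, so only finitely many $v$ contribute; likewise the $y$-sum in \eqref{recurse3} is effectively finite because $w_q(z-y)=0$ as soon as $y>x$.

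I would then argue by induction on $n:=\lfloor x\rfloor$. The base case $n=0$ is immediate: for $x\in[0,1)$ the only admissible index is $v=0$, contributing the empty product times $(0-x)^0=1$, so $\tilde w\equiv1$ there, matching \eqref{recurse_start}. A small but essential preliminary is the continuity of $\tilde w$ at the integers: passing from level $n-1$ to level $n$ only adds indices $v$ with $\langle v,J\rangle=n$, and each such $v$ has $|v|\ge1$, so at $x=n$ its contribution carries the factor $(\langle v,J\rangle-n)^{|v|}=0^{|v|}=0$. Hence $\tilde w$ is continuous and the value $\tilde w(n)$ is unambiguous whether read off at level $n$ or as a limit from level $n-1$.

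For the inductive step I fix $x\in(n,n+1)$ and assume \eqref{latsol} valid on $[0,n)$. In $\int_n^x\tilde w(z-y)\,\diff z$ I use that for $z\in[n,x]$ and integer $y$ one has $\lfloor z-y\rfloor=n-y$, so (for $y\le n$, the remaining terms vanishing) $\tilde w(z-y)$ is given by \eqref{latsol} with constraint $\langle u,J\rangle\le n-y$; the only genuine calculus is $\int_n^x(\langle u,J\rangle+y-z)^{|u|}\,\diff z=\tfrac{1}{|u|+1}\big[(\langle u,J\rangle+y-n)^{|u|+1}-(\langle u,J\rangle+y-x)^{|u|+1}\big]$. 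The decisive move is the reindexing $v=u+\mathbf e_y$, i.e. incrementing the $y$-coordinate of $u$ by one, under which $|v|=|u|+1$, $\langle v,J\rangle=\langle u,J\rangle+y$, and the bookkeeping identity $m_y\prod_{\ell\in J}\tfrac{m_\ell^{u_\ell}}{u_\ell!}=v_y\prod_{\ell\in J}\tfrac{m_\ell^{v_\ell}}{v_\ell!}$ holds. After this substitution the weighted $y$-sum of $m_y\int_n^x\tilde w(z-y)\,\diff z$ becomes a double sum over pairs $(v,y)$ with $v_y\ge1$ and $\langle v,J\rangle\le n$, each index $v$ carrying the factor $\tfrac{v_y}{|v|}\prod_{\ell\in J}\tfrac{m_\ell^{v_\ell}}{v_\ell!}$.

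The main obstacle, and the real crux, is collapsing this double sum. For each fixed $v\ne0$ with $\langle v,J\rangle\le n$ I sum over the admissible $y$ and use $\sum_{y:\,v_y\ge1}\tfrac{v_y}{|v|}=\tfrac{|v|}{|v|}=1$, together with the remark that $v_y\ge1$ already forces $y\le\langle v,J\rangle\le n$, so the side condition $y\le n$ is automatic. This turns $-\sum_{y\in J}m_y\int_n^x\tilde w(z-y)\,\diff z$ into $\sum_{v\ne0,\,\langle v,J\rangle\le n}\prod_{\ell\in J}\tfrac{m_\ell^{v_\ell}}{v_\ell!}\big[(\langle v,J\rangle-x)^{|v|}-(\langle v,J\rangle-n)^{|v|}\big]$, which is exactly $\tilde w(x)-\tilde w(n)$ (the $v=0$ term drops out, its bracket being $1-1=0$, and both sums run over $\langle v,J\rangle\le n$). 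Hence $\tilde w$ solves \eqref{recurse3} on $[n,n+1)$, closing the induction, and by the uniqueness quoted above $\tilde w=w_q$.
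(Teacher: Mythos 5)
Your proof is correct and follows essentially the same route as the paper: induction on $\lfloor x\rfloor$, the observation that new summands vanish at their first appearance (continuity at integers), the same computation of $\int_n^x(\langle u,J\rangle+y-z)^{|u|}\,\diff z$, and the same reindexing $v=u+e_y$. The only cosmetic difference is that you collapse the double sum via $\sum_{y:\,v_y\geq 1} v_y/|v|=1$ where the paper inserts multinomial coefficients and uses their recurrence relation — these are the same identity — and that you verify the candidate against \eqref{recurse3} and invoke uniqueness rather than solving the recursion forward.
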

\begin{proof}
First note that both the sum and the product in \eqref{latsol} are always finite: Since \(J\) is bounded away from zero, for any fixed $x\geq 0$ the condition on \(v\in\N^{J}\) implies that only those \(v\in\N^{J}\) with \(|v|\leq \lfloor x\rfloor\) may appear, and the entries of \(v\) being in \(\N\) implies that merely finitely many of them are non-zero. \newline
Consider \(x\in[0,1)\). The sum in \eqref{latsol} then consists of only one summand, namely the one generated by \(v\equiv 0\), and we obtain \(w_q(x)=1\). The second summand is generated by \(v=(1,0,\ldots)\) and appears for \(x\geq1\). However, it vanishes for \(x=1\) as \(\langle v, J\rangle-x=0\) and, hence, \(w_q(1)=1\) as well which coincides with \eqref{recurse3}. Indeed, any new summand is zero at its first appearance and this guarantees continuity of \(w_q\). \newline
Now assume \eqref{latsol} is true on \([0,n]\) for some \(n\in\N\) and let \(x\in(n,n+1)\). By \eqref{recurse3} we find
\[
w_q(x)=w_q(n) - \sum_{y\in J} m_y \int^x_{n} \sum_{\substack{v\in\N^{J} \\ \langle v, J\rangle\leq \lfloor z-y \rfloor}} \l(\prod_{\ell\in J} \frac{m_\ell^{v_\ell}}{v_\ell !}\r)\Big(\langle v, J\rangle-z+y\Big)^{|v|} \diff z.
\]
To swap the integral with the sum to its right we fix \(y\in J\) for a moment. Since we integrate over \(z\in[n,x]\) and \(y\in J \subset \N\) it holds \(\lfloor z-y\rfloor =  n-y\). Thus, the summation area is independent of \(z\) and we can compute further
\begin{align*}
w_q(x)&=w_q(n) -  \sum_{y\in J} m_y \sum_{\substack{v\in\N^{J} \\ \langle v, J\rangle\leq  n-y}}\l(\prod_{\ell\in J} \frac{m_\ell^{v_\ell}}{v_\ell !}\r) \int^{x}_{n}  \Big(\langle v, J\rangle-z+y\Big)^{|v|} \diff z \\
&=w_q(n) + \sum_{y\in J} m_y \sum_{\substack{v\in\N^{J} \\ \langle v, J\rangle\leq n-y}}\l(\prod_{\ell\in J} \frac{m_\ell^{v_\ell}}{v_\ell !}\r)
  \l(\frac{(\langle v, J\rangle-x+y)^{|v|+1} }{|v|+1}-\frac{(\langle v, J\rangle-n+y)^{|v|+1} }{|v|+1}\r).
\end{align*}
Denoting by \(e_y\in\N^{J}\) the unique element with $|e_y|=1$ and  \(\langle e_y,J\rangle= y \) we can write
\begin{align*}
w_q(x)&=w_q(n) + \sum_{y\in J}\sum_{\substack{v\in\N^{J} \\ \langle v+e_y, J\rangle\leq n}}\l(\prod_{\ell\in J} \frac{m_\ell^{(v+e_y)_\ell}}{v_\ell !}\r)
  \l(\frac{(\langle v+e_y, J\rangle-x)^{|v+e_y|} }{|v+e_y|}-\frac{(\langle v+e_y, J\rangle-n)^{|v+e_y|} }{|v+e_y|}\r).
\end{align*}
 Although not obvious at first, it is possible to combine the two sums. To achieve that we need to count how often each summand appears. Recall that only finitely many entries of the appearing  $v$ are non-zero. Thus by first multiplying with \(\frac{|v|!}{|v|!} = |v|!\cdot \frac{|v+e_y|}{|v+e_y|!}\) we may insert a multinomial coefficient and obtain
\begin{align*}
w_q(x)=w_q(n) + \sum_{y\in J}\sum_{\substack{v\in\N^{J} \\ \langle v+e_y, J\rangle\leq n}}&{|v| \choose v_1,v_2, \cdots}\l(\frac{\prod_{\ell\in J}^\infty m_\ell^{(v+e_y)_\ell}}{|v+e_y|!}\r)\cdot\\
 & \cdot\l((\langle v+e_y, J\rangle-x)^{|v+e_y|}-(\langle v+e_y, J\rangle-n)^{|v+e_y|} \r),
\end{align*}
where we set $v_x=0$ for $x\in \NN\setminus J$.
Further, set
\[
f(v)= \l(\frac{\prod_{\ell\in J} m_\ell^{v_\ell}}{|v|!}\r)\l((\langle v, J\rangle-x)^{|v|}-(\langle v, J\rangle-n)^{|v|} \r), \quad v\in\N^{J},
\]
then
\begin{align*}
w_q(x)=w_q(n) + \sum_{y\in J}\sum_{\substack{v\in\N^{J} \\ \langle v+e_y, J\rangle\leq n}}{|v| \choose v_1,v_2, \cdots}f(v+e_y).
\end{align*}
Observe that \(\bar v\in\N^{J}\) appears in the argument of \(f\) if it fulfils \(\langle \bar v,J\rangle \leq n\) and if there exists a vector \(v\in\N^{J}\) such that \(\bar v = v+e_y\) for some \(y\in J\). 
Thus  \(\bar v\equiv 0\) is not part of the sum due to it being obviously the only vector where there are no \(v\in\N^{J}\) and \(y\in J\) such that \(v+e_y\equiv 0\), and hence,
\begin{align*}
w_q(x)&=w_q(n) + \sum_{y\in J}\sum_{\substack{\bar{v}\in\N^{J} \\ 1\leq \langle \bar{v}, J\rangle\leq n}}{|\bar{v}-1| \choose  \bar v_1, \cdots, \bar v_{y-1}, \bar v_y-1, \bar v_{y+1},\cdots}f(\bar{v})\\
&=w_q(n) +\sum_{\substack{\bar{v}\in\N^{J} \\ 1\leq \langle \bar{v}, J\rangle\leq n}} f(\bar{v}) \sum_{y\in J} {|\bar{v}-1| \choose  \bar v_1, \cdots, \bar v_{y-1}, \bar v_y-1, \bar v_{y+1},\cdots}\\
&=w_q(n) +\sum_{\substack{\bar{v}\in\N^{J} \\ 1\leq \langle \bar{v}, J\rangle\leq n}} f(\bar{v}) {|\bar v| \choose \bar v_1, \bar v_2, \cdots}, 
\end{align*}
by the recurrence relation for multinomial coefficients.\\
In the following we again replace \(\bar v\) by \(v\) and plug-in \(w_q(n)\) as well. Then everything comes neatly together as 
\begin{align*}
w_q(x)&= \sum_{\substack{v\in\N^{J} \\ \langle v, J\rangle\leq n}} \l(\prod_{\ell \in J} \frac{m_\ell^{v_\ell}}{v_\ell !}\r)\Big(\langle v, J\rangle-n\Big)^{|v|}
+\sum_{\substack{v\in\N^{J} \\ 1\leq\langle v, J\rangle\leq n}} \l(\prod_{\ell\in J} \frac{m_\ell^{v_\ell}}{v_\ell !}\r)\l((\langle v, J\rangle-x)^{|v|}-(\langle v, J\rangle-n)^{|v|} \r) \\
&=\sum_{\substack{v\in\N^{J} \\ \langle v, J\rangle\leq\lfloor x\rfloor}} \l(\prod_{\ell \in J} \frac{m_\ell^{v_\ell}}{v_\ell !}\r)\Big(\langle v, J\rangle-x\Big)^{|v|},
\end{align*}
which completes the proof upon realising that the continuity argument we utilised for \(x=1\) is applicable for \(x=n+1\) as well.
\end{proof}

As mentioned above we may treat lattices with arbitrary step \(\varepsilon>0\), i.e. not only integer jumps, by the scaling arguments of Lemma \ref{scaling} and Lemma \ref{scaling1}. We use the notation
\begin{align}\label{gaussepsilon}
\lfloor x \rfloor_\varepsilon:= \varepsilon \lfloor \tfrac{x}{\varepsilon} \rfloor =\max\{z\in \varepsilon\Z ~:~ z < x\}\quad \text{for all }x\in\RR, \varepsilon>0.
\end{align}

\begin{proposition}\label{general_lattice}
Let  $(L_t)_{t\geq 0}$ be a spectrally negative compound Poisson process with drift \(c>0\), intensity \(\lambda>0\) and jump measure \(\Pi=\sum_{y\in J} p_y \delta_{y}\) such that \(J:=\mathrm{supp}(\Pi)\subset \varepsilon\N\setminus\{0\}\) for some \(\varepsilon>0\). Then for \(q\geq0\) the \(q\)-scale function \(W^{(q)}\) of  $(L_t)_{t\geq 0}$ is given by
\begin{align}\label{smalllatsol1}
W^{(q)}(x)=\frac{1}{c} \e^{\frac{ q+\lambda}{c} x}\sum_{\substack{v\in\N^{J} \\ \langle v, J \rangle\leq\lfloor x \rfloor_\varepsilon}}  \l(\prod_{\ell\in J} \frac{m_{\ell}^{v_\ell}}{v_\ell !}\r)\Big(\langle v, J\rangle-x\Big)^{|v|},\quad x\geq0,
\end{align}
where \(m_y=\frac{\lambda  p_y}{c} \e^{-\frac{q+\lambda}{c} y}\), $y\in J$.
\end{proposition}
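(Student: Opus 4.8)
The plan is to reduce the general lattice setting to the case already settled in Lemma \ref{lattice_explicit}, where the drift equals $1$ and the lattice lives on the positive integers, by chaining the two scaling results of Section \ref{S2}. The order matters: I would first normalise the lattice step with Lemma \ref{scaling} and only afterwards normalise the drift with Lemma \ref{scaling1}, since a single application of Lemma \ref{scaling} that turns the jumps into integers necessarily rescales the drift away from $1$, and the subsequent time change restores it.

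First I would normalise the lattice step. Applying Lemma \ref{scaling} with the given $\varepsilon$ gives $W^{(q)}(x)=\tfrac1\varepsilon \hat W^{(q)}(x/\varepsilon)$, where $\hat W^{(q)}$ is the $q$-scale function of $\hat L_t=\tfrac1\varepsilon L_t$. The process $\hat L$ is again spectrally negative compound Poisson, now with drift $c/\varepsilon$, unchanged intensity $\lambda$, and jump measure $\sum_{y\in J}p_y\delta_{y/\varepsilon}$ supported on $J/\varepsilon\subset\N\setminus\{0\}$, so its jumps are integer-valued. Next I would normalise the drift: applying Lemma \ref{scaling1} to $\hat L$ with $\tilde c=c/\varepsilon>0$ yields $\hat W^{(q)}(y)=\tfrac\varepsilon c\,\hat{\hat W}^{(q\varepsilon/c)}(y)$, where $\hat{\hat W}$ is the scale function of the time-changed process $\hat{\hat L}_t=\hat L_{t\varepsilon/c}$. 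A direct check shows $\hat{\hat L}$ has drift $1$, intensity $\Lambda:=\lambda\varepsilon/c$, and the integer jump distribution $\hat p_\ell:=p_{\varepsilon\ell}$, $\ell\in\hat J:=J/\varepsilon$. Combining the two identities gives $W^{(q)}(x)=\tfrac1c\,\hat{\hat W}^{(\hat q)}(x/\varepsilon)$ with $\hat q:=q\varepsilon/c$, and Lemma \ref{lattice_explicit} now applies verbatim to $\hat{\hat L}$, producing $\hat{\hat W}^{(\hat q)}(y)=\e^{(\hat q+\Lambda)y}\hat w(y)$ with $\hat w$ as in \eqref{latsol} and coefficients $\hat m_\ell=\Lambda\hat p_\ell\e^{-(\hat q+\Lambda)\ell}$.

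It then remains to substitute $y=x/\varepsilon$ and unwind the transformation, and the main (purely computational) obstacle is to verify that all powers of $\varepsilon$ cancel. Three observations do the job: (i) $\hat q+\Lambda=\varepsilon(q+\lambda)/c$, so the exponent $(\hat q+\Lambda)(x/\varepsilon)$ collapses to $\tfrac{q+\lambda}{c}x$; (ii) under the bijection $\N^{\hat J}\to\N^{J}$, $u\mapsto v$ with $v_y=u_{y/\varepsilon}$, one has $|u|=|v|$ and $\langle u,\hat J\rangle=\langle v,J\rangle/\varepsilon$, and since $\langle v,J\rangle\in\varepsilon\Z$ the summation condition $\langle u,\hat J\rangle\le\lfloor x/\varepsilon\rfloor$ is equivalent to $\langle v,J\rangle\le\lfloor x\rfloor_\varepsilon$ by \eqref{gaussepsilon}; and (iii) $\hat m_\ell=\varepsilon\,m_{\varepsilon\ell}$ with $m_y$ as defined in the proposition, so the product $\prod_{\ell}\hat m_\ell^{u_\ell}/u_\ell!$ contributes a factor $\varepsilon^{|v|}$, which exactly cancels the factor $\varepsilon^{-|v|}$ arising from $(\langle u,\hat J\rangle-x/\varepsilon)^{|u|}=\varepsilon^{-|v|}(\langle v,J\rangle-x)^{|v|}$.

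Collecting these three facts, the factor $\tfrac1c$, the exponential $\e^{\frac{q+\lambda}{c}x}$, and the reindexed sum assemble into precisely the right-hand side of \eqref{smalllatsol1}, which completes the proof. Everything past the invocation of Lemma \ref{lattice_explicit} is bookkeeping of the parameter transformations $(c,\lambda,q,J)\mapsto(1,\Lambda,\hat q,\hat J)$ and the cancellation of the $\varepsilon$-powers; the only point requiring a little care is the equivalence of the two summation ranges in (ii), which hinges on $\langle v,J\rangle$ being an integer multiple of $\varepsilon$.
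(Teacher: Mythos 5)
Your proposal is correct and follows essentially the same route as the paper: both reduce to Lemma \ref{lattice_explicit} by composing the two scaling lemmas to pass to the unit-drift, integer-lattice process with intensity $\varepsilon\lambda/c$ and level $\varepsilon q/c$, obtaining $W^{(q)}(x)=\tfrac1c\,\tilde W^{(\varepsilon q/c)}(x/\varepsilon)$, and then verify the cancellation of the $\varepsilon$-powers exactly as in your items (i)--(iii).
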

\begin{proof}
Recall from \eqref{eq-specialtype} that $L_t = c t - \sum_{i=1}^{N_t} \xi_i,$ and define the auxiliary Lévy process $(\tilde{L}_t)_{t\geq 0}$ by 
\begin{align*}
\tilde L_t = t-\sum_{i=1}^{\tilde N_{t}} \frac{\xi_i}\varepsilon, \quad t\geq 0,
\end{align*}
where \(\tilde N_t:=N_{\varepsilon t/c}\), i.e. \(\tilde L_t\) has intensity \(\tilde \lambda = \varepsilon \lambda/c\). We easily convince ourselves that the jump measure of \(\tilde L_t\) is supported on \(\tilde{J}=\frac{J}{\varepsilon}\subseteq \N\setminus\{0\}\). Thus, Lemma \ref{lattice_explicit} is applicable and we can compute the $q$-scale function of $(\tilde{L}_t)_{t\geq 0}$ via Corollary \ref{auxiliary} as
\begin{align*}
\tilde W^{(q)}(x)&= \e^{(q+\tilde{\lambda})x} \tilde{w}_q(x) 
\\
&=\e^{(q+\varepsilon\lambda/c)x}\sum_{\substack{v\in\N^{J/\varepsilon} \\ \langle v, \frac J\varepsilon \rangle\leq\lfloor x\rfloor}} \l(\prod_{\ell \in J/\varepsilon} \frac{ ({\frac{\varepsilon\lambda}{c}  p_{\varepsilon\ell}} \e^{-(q+\varepsilon\lambda/c) \ell})^{v_\ell}}{v_\ell !}\r)\Big(\langle v, \frac{J}{\varepsilon}\rangle-x\Big)^{|v|},~x\geq0,
\end{align*}
for any \(q\geq0\).
Now we consecutively apply Lemma \ref{scaling1} and Lemma \ref{scaling} to obtain for all \(q\geq0\) and \(x\geq0\) the relationship
\[
W^{(q)}(x)=\frac{1}{c} \tilde W^{(\varepsilon q/c)} \Big(\frac{x}{\varepsilon}\Big),
\]
where \(W^{(q)}\) is the scale function of \(L_t\). 
Thus, we find
\begin{align*}
W^{(q)}(x)&=\frac{1}{c} \e^{(\varepsilon q/c+\varepsilon\lambda/c)\frac{x}{\varepsilon}}\sum_{\substack{v\in\N^{J/\varepsilon} \\ \langle v, \frac J\varepsilon \rangle\leq\lfloor \frac{x}{\varepsilon} \rfloor}} \l(\prod_{\ell\in J/\varepsilon}\frac{ ({\frac{\varepsilon\lambda}{c}  p_{\varepsilon\ell}} \e^{-(\varepsilon q/c+\varepsilon\lambda/c) \ell})^{v_\ell}}{v_\ell !}\r)\Big(\langle v, \frac{J}{\varepsilon}\rangle-\frac{x}{\varepsilon}\Big)^{|v|}\\
&=\frac{1}{c} \e^{\frac{ q+\lambda}{c} x}\sum_{\substack{v\in\N^{J} \\ \langle v, J \rangle\leq\lfloor x \rfloor_\varepsilon}} \l(\prod_{\ell\in J} \frac{ ({\frac{\lambda}{c}  p_\ell} \e^{- \frac{q+\lambda}{c} \ell})^{v_\ell}}{v_\ell !}\r)\Big(\langle v, J \rangle-x \Big)^{|v|},
\end{align*}
as had to be shown.
\end{proof}

Our strategy's next step is approximation, i.e. we consider converging sequences of spectrally negative compound Poisson processes and investigate the limit of the respective sequence of scale functions. \newline
However, the formula obtained in Proposition \ref{general_lattice} is yet too cumbersome for that purpose. Luckily we can order the sum in \eqref{smalllatsol1} in a way such that the parameters of the process, i.e. drift, intensity and jump measure, become  visible.
 
\begin{corollary}\label{ordering}
	In the setting of Proposition \ref{general_lattice} it holds for all \(q\geq0\) 
	\begin{align}\label{neworder}
	W^{(q)}(x)= \frac{1}{c}  \sum_{k=0}^{\lfloor \frac x\varepsilon\rfloor} \frac{(\lambda/c)^k}{k!} \int_0^{\lfloor x\rfloor_\varepsilon}  \e^{-\frac{q+\lambda}{c} ( s - x)} ( s -x)^{k}~
	\Pi^{*k}(\diff  s), \quad x\geq 0.
	\end{align}
\end{corollary}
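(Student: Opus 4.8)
The plan is to start from the explicit formula \eqref{smalllatsol1} of Proposition \ref{general_lattice} and reorganise the sum over $v\in\N^J$ according to the total number of jumps $k=|v|$, recognising that summing over all $v$ with $|v|=k$ and fixed accumulated jump size $\langle v,J\rangle=s$ reproduces precisely the $k$-th convolution power $\Pi^{*k}$ evaluated at $s$.

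First I would unpack the weight in \eqref{smalllatsol1}. Writing $m_\ell=\frac{\lambda p_\ell}{c}\e^{-\frac{q+\lambda}{c}\ell}$ and using $\sum_{\ell\in J}\ell v_\ell=\langle v,J\rangle$ and $\sum_{\ell\in J}v_\ell=|v|$, the product factors as
\[
\prod_{\ell\in J}\frac{m_\ell^{v_\ell}}{v_\ell!}=\l(\frac{\lambda}{c}\r)^{|v|}\e^{-\frac{q+\lambda}{c}\langle v,J\rangle}\prod_{\ell\in J}\frac{p_\ell^{v_\ell}}{v_\ell!}.
\]
Combining the exponential with the prefactor $\e^{\frac{q+\lambda}{c}x}$ turns it into $\e^{-\frac{q+\lambda}{c}(\langle v,J\rangle-x)}$, so that \eqref{smalllatsol1} becomes a sum over $v$ whose terms depend on $v$ only through $|v|$, $\langle v,J\rangle$, and $\prod_{\ell}\frac{p_\ell^{v_\ell}}{v_\ell!}$.

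The heart of the argument is the bookkeeping step. I would group the $v$-sum by $k:=|v|$ and insert the factor $k!/k!$ to write $\prod_{\ell}\frac{p_\ell^{v_\ell}}{v_\ell!}=\frac{1}{k!}\binom{k}{v_1,v_2,\cdots}\prod_{\ell}p_\ell^{v_\ell}$. The key identity is then that the multinomial expansion of the $k$-fold convolution of $\Pi=\sum_{y\in J}p_y\delta_y$ reads
\[
\Pi^{*k}=\sum_{\substack{v\in\N^J\\ |v|=k}}\binom{k}{v_1,v_2,\cdots}\l(\prod_{\ell\in J}p_\ell^{v_\ell}\r)\delta_{\langle v,J\rangle},
\]
i.e. the atom of $\Pi^{*k}$ at a point $s$ collects exactly the ordered $k$-tuples of jumps summing to $s$. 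Consequently, for each fixed $k$ the inner sum over $\{v:|v|=k,\ \langle v,J\rangle\leq\lfloor x\rfloor_\varepsilon\}$ of $\e^{-\frac{q+\lambda}{c}(\langle v,J\rangle-x)}(\langle v,J\rangle-x)^{k}\prod_{\ell}\frac{p_\ell^{v_\ell}}{v_\ell!}$ equals $\frac{1}{k!}\int_0^{\lfloor x\rfloor_\varepsilon}\e^{-\frac{q+\lambda}{c}(s-x)}(s-x)^k\,\Pi^{*k}(\diff s)$, since $\frac{1}{k!}\binom{k}{v_1,\cdots}=\prod_{\ell}\frac{1}{v_\ell!}$; pulling out the $k$-independent factor $(\lambda/c)^{k}$ then reproduces the $k$-th summand of \eqref{neworder}.

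Finally I would pin down the range of the outer sum: since $J\subset\varepsilon\N\setminus\{0\}$, every jump is at least $\varepsilon$, so any contributing $v$ satisfies $|v|\varepsilon\leq\langle v,J\rangle\leq\lfloor x\rfloor_\varepsilon\leq x$, whence $k=|v|\leq\lfloor x/\varepsilon\rfloor$; thus the terms $0\le k\le\lfloor x/\varepsilon\rfloor$ capture the entire sum and nothing is lost. I expect the main obstacle to be the combinatorial identity above, namely correctly matching the per-size factorials $\prod_{\ell}v_\ell!$ appearing in \eqref{smalllatsol1} against the multinomial coefficients hidden in $\Pi^{*k}$, together with checking that the discreteness of $\Pi$ lets the integral against $\Pi^{*k}$ collapse to exactly this weighted sum over $v$. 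Everything else is routine rearrangement.
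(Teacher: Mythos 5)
Your proposal is correct and follows essentially the same route as the paper: unpacking the weights $m_\ell$, regrouping the sum over $v$ by $k=|v|$, inserting the factor $k!/k!$ to produce multinomial coefficients, and identifying the resulting inner sum with the integral against $\Pi^{*k}$ via the multinomial expansion of the convolution power (the paper merely makes the intermediate grouping by $j=\langle v,J\rangle/\varepsilon$ explicit before invoking this identity). Your bound $k=|v|\le\lfloor x/\varepsilon\rfloor$ for the outer summation range is also exactly the argument used in the paper.
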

\begin{proof}
	Recall that \eqref{smalllatsol1} is a finite sum due to the restriction \(\langle v,J\rangle \leq \lfloor x \rfloor_\varepsilon\). But this implies \(|v|\leq \frac{1}{\varepsilon} \langle v,J\rangle \leq \lfloor \frac x\varepsilon\rfloor\) via \eqref{gaussepsilon} since \(\min\{y :~ y\in J\} \geq \varepsilon\). Thus we may reorder \eqref{smalllatsol1} according to the value of \(|v|\). We obtain for \(x\geq0\)
	\begin{align*}
	W_q(x)&=\frac{1}{c} \e^{\frac{ q+\lambda}{c} x} \sum_{k=0}^{\lfloor \frac x\varepsilon\rfloor} \sum_{\substack{v\in\N^{J} \\ \langle v, J \rangle\leq\lfloor x \rfloor_\varepsilon \\ |v|=k}}  \l(\prod_{\ell\in J} \frac{m_{\ell}^{v_\ell}}{v_\ell !}\r)\Big(\langle v, J\rangle-x\Big)^{k}\\
	&= \frac{1}{c} \e^{\frac{ q+\lambda}{c} x} \sum_{k=0}^{\lfloor \frac x\varepsilon\rfloor} \sum_{\substack{v\in\N^{J} \\ \langle v, J \rangle\leq \varepsilon \lfloor \frac{x}{\varepsilon} \rfloor \\ |v|=k}}  {k\choose v_{\varepsilon},v_{2\varepsilon},\cdots} \frac{(\lambda/c)^k}{k!} \l(\prod_{\ell \in J} p_\ell^{v_\ell}\r) \e^{-\frac{q+\lambda}{c} \langle v, J\rangle} \Big(\langle v, J\rangle-x\Big)^{k},
	\end{align*}
	where we inserted a multinomial coefficient by multiplying with \(\frac{k!}{k!}\) and set $v_x=0$ for $x\notin J$. The inner sum can be ordered even further, this time according to the value of \(\frac{1}{\varepsilon}\langle v,J\rangle\). It holds
	\begin{align*}
	W_q(x)&=\frac{1}{c} \e^{\frac{ q+\lambda}{c} x} \sum_{k=0}^{\lfloor \frac x\varepsilon\rfloor} \sum_{j=k}^{\lfloor \frac x\varepsilon\rfloor} \sum_{\substack{v\in\N^{J} \\ \langle v, J \rangle = \varepsilon j \\ |v|=k}}  {k\choose v_{\varepsilon},v_{2\varepsilon},\cdots} \frac{(\lambda/c)^k}{k!} \l(\prod_{\ell \in J} p_\ell^{v_\ell}\r) \e^{-\frac{q+\lambda}{c} \varepsilon j} \Big(\varepsilon j -x\Big)^{k}\\
	&= \frac{1}{c} \e^{\frac{ q+\lambda}{c} x} \sum_{k=0}^{\lfloor \frac x\varepsilon\rfloor} \sum_{j=k}^{\lfloor \frac x\varepsilon\rfloor} \frac{(\lambda/c)^k}{k!} \e^{-\frac{q+\lambda}{c} \varepsilon j} \Big(\varepsilon j -x\Big)^{k}
	\sum_{\substack{v\in\N^{J} \\ \langle v, J \rangle = \varepsilon j \\ |v|=k}}  {k\choose v_{\varepsilon},v_{2\varepsilon},\cdots}  \l(\prod_{\ell \in J} p_\ell^{v_\ell}\r).
	\end{align*}
	We easily convince ourselves that 
	\[
	\Pi^{*k}(\varepsilon j)=\sum_{\substack{v\in\N^{J}\\ \langle v, J\rangle=\varepsilon j \\|v|=k}} 
	{k\choose v_\varepsilon,v_{2\varepsilon},\cdots}
	\l(\prod_{\ell\in J} p_\ell^{v_\ell}\r)
	\]
	for \(j,k\in\N\). Thereby the proof is concluded since $\Pi$ is discrete and $\Pi^{\ast k}([0,k\varepsilon))=0$.
\end{proof}

	To simplify notation in the following recall  $g_k(s,x)$, $0\leq s,x$, \(k\in\N\) from \eqref{g} 
	then  \eqref{neworder} is 
	\begin{align}\label{almostready}
	W^{(q)}(x)=\frac{1}{c} \sum_{k=0}^{\lfloor \frac x\varepsilon\rfloor} \int^{\lfloor x\rfloor_\varepsilon}_0 g_k(s,x) ~\Pi^{*k}(\diff s).
	\end{align}
	Further, for \(k>\lfloor \frac{x}{\varepsilon} \rfloor\) it holds \(\mathrm{supp}(\Pi^{*k})\subset (\lfloor x \rfloor_\varepsilon,\infty)\). Moreover, for any \(x\in\R_+\setminus\N\) the interval \((\lfloor x \rfloor_\varepsilon,x]\) is a \(\Pi^{*k}\)-null set for all \(k\in\N\). Hence, we may write in the setting above
	\begin{align}\label{representation}
	W^{(q)}(x)=\frac{1}{c}  \sum_{k=0}^{\infty} \int^{x}_0 g_k(s,x) ~\Pi^{*k}(\diff s), \quad x\geq 0,
	\end{align}
	which is \eqref{mainform}.

	Now we have an expression at hand where the parameters of the Lévy process are clearly visible. Due to the isolation of the jump measure investigating limits is a straightforward task.

\subsection{Arbitrary jump distributions}\label{S33}
	In this section we make use of the fact that probability measures can be approximated by lattice measures as it is stated in the following lemma.
	\begin{lemma}\label{dense}
	The set of positive lattice distributions is dense in the set of probability distributions on $\RR_+$ with respect to the topology of weak convergence.
	\end{lemma}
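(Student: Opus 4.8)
The plan is to prove density by approximating an arbitrary probability measure $\mu$ on $\RR_+$ first by a compactly supported measure, and then by discretizing it onto a fine lattice. Since weak convergence on $\RR_+$ is metrized by a metric such as the L\'evy--Prokhorov or bounded-Lipschitz metric, it suffices to produce, for each $\mu$ and each $\eps>0$, a positive lattice distribution within distance $\eps$ of $\mu$; equivalently, I would exhibit for every $\mu$ a sequence of positive lattice distributions converging weakly to $\mu$. First I would truncate: choose $M>0$ with $\mu((M,\infty))<\eps$ and replace $\mu$ by its normalized restriction to $[0,M]$, which is weakly close to $\mu$ when $M$ is large (using that a continuity point of the distribution function can be chosen for $M$).

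Next I would discretize the compactly supported measure onto the lattice $\eps' \NN$ for small step $\eps'>0$. The cleanest device is the pushforward under the ceiling-to-lattice map $T_{\eps'}(y):=\eps'\lceil y/\eps'\rceil$, which sends every point of $[0,M]$ to a point of $\eps'\NN$; since the definition of positive lattice distribution in the excerpt requires $J=\mathrm{supp}(\Pi)\subset\eps'\NN\setminus\{0\}$, I would map strictly positive mass onto strictly positive lattice points (the map $T_{\eps'}$ sends $0$ to $0$, so if $\mu$ charges $\{0\}$ one maps that atom to $\eps'$ instead, which costs at most $\eps'$ in the metric and is harmless). The resulting measure $\mu_{\eps'}:=\mu\circ T_{\eps'}^{-1}$ is a finitely supported probability measure on $\eps'\NN\setminus\{0\}$, hence exactly a positive lattice distribution in the sense defined before Lemma~\ref{dense}.

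To establish weak convergence $\mu_{\eps'}\Rightarrow\mu$ as $\eps'\to0$ I would invoke the portmanteau characterization via bounded continuous test functions: for $f\in C_b(\RR_+)$,
\[
\left|\int f\,\dd\mu_{\eps'}-\int f\,\dd\mu\right|
=\left|\int \bigl(f(T_{\eps'}(y))-f(y)\bigr)\,\mu(\dd y)\right|
\le \sup_{y\in[0,M]}\bigl|f(T_{\eps'}(y))-f(y)\bigr|+2\|f\|_\infty\,\mu((M,\infty)).
\]
Because $|T_{\eps'}(y)-y|\le\eps'$ uniformly and $f$ is uniformly continuous on the compact set $[0,M+1]$, the first term tends to $0$ as $\eps'\to0$; the second is controlled by first taking $M$ large. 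A standard diagonal choice of $M\to\infty$ together with $\eps'\to0$ then yields a sequence of positive lattice distributions converging weakly to $\mu$, proving density.

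The main obstacle is bookkeeping at the boundary rather than any deep analytic difficulty: one must ensure the approximating measures land on $\eps'\NN\setminus\{0\}$ (avoiding the forbidden atom at $0$ in the definition of a positive lattice distribution) and remain genuine probability measures after truncation and renormalization, while keeping the total perturbation small. Choosing $M$ at a continuity point of the distribution function of $\mu$ and using the uniform displacement bound $|T_{\eps'}(y)-y|\le\eps'$ handles both issues cleanly, so the argument reduces to the uniform-continuity estimate displayed above.
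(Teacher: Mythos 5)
Your proof is correct and is essentially the paper's argument: both discretize $\mu$ by pushing the mass of each cell $((k-1)\eps',k\eps']$ up to the lattice point $k\eps'$; the paper phrases this through the step-function CDF $F_n(x)=F(\lfloor x\rfloor_{\eps_n})$ and checks pointwise convergence at continuity points, while you verify weak convergence with bounded continuous test functions. The preliminary truncation is superfluous (the tail term in your displayed estimate already handles it), and your explicit treatment of a possible atom at $0$ is a small point the paper sidesteps by assuming $F(0)=0$.
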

	\begin{proof}
	By definition, a sequence \(\{\mu_n\}_{n\in\N}\) of probability measures converges weakly to a probability measure \(\mu\), if and only if the sequence \(\{F_n\}_{n\in\N}\) of their respective cumulative distribution functions converges pointwise to the distribution function \(F\) of \(\mu\), wherever \(F\) is continuous. \newline
	Hence, it suffices to prove that any distribution function with $F(0)=0$ can be approximated by a sequence of step functions corresponding to lattice distributions with successively smaller steps \(\varepsilon_n\searrow 0\). Such a sequence may be constructed in the following way.\newline
	For fixed \(n\in\N\) we set
	\begin{align*}
	F_n(x)=\sum_{k=0}^\infty F(k\varepsilon_n)\bone_{[k\varepsilon_n,(k+1)\varepsilon_n)}
	\end{align*}
	for all \(x\in\R_+\). By construction \(F_n\) is the cumulative distribution function of a lattice distribution as it fulfils \(F_n(0)=0\), is right-continuous and it holds \(\lim_{x\to\infty}F_n(x)=1\) due to \(\lim_{x\to\infty}F(x)=1\). Choose \(x\in\R_+\) arbitrary such that \(F\) is continuous at \(x\). For all \(n\in\N\) it then holds \(F_n(x)=F(\lfloor x \rfloor_{\varepsilon_n})\). But then \(\lim_{n\to\infty} \lfloor x \rfloor_{\varepsilon_n} = x\) implies \(\lim_{n\to\infty} F(\lfloor x \rfloor_{\varepsilon_n}) = F(x)\) and the proof is finished.
	\end{proof}

	The idea is now clear. Denote by \(L:=(L_t)_{t\geq 0}\) a spectrally negative compound Poisson process with drift \(c>0\), intensity \(\lambda>0\) and (spectrally one-sided) jump measure \(\Pi\) as in \eqref{eq-specialtype}. Choose a sequence \(\{\Pi_n\}_{n\in\N}\) of lattice distributions which converges weakly to \(\Pi\) and for \(n\in\N\) denote by \(L^{(n)}:=(L^{(n)}_t)_{t\geq 0}\) the compound Poisson process with the same drift and intensity as \(L\) but with jump measure \(\Pi_n\) instead. We keep this notation throughout this section. 
	
	By Corollary \ref{ordering} for fixed \(q\geq0\) and \(n\in\N\) the \(q\)-scale function \(W^{(q)}_n\) of \(L^{(n)}\) is
	\begin{align*}
	W^{(q)}_n(x)=\frac{1}{c}  \sum_{k=0}^{\infty} \int^{x}_0 g_k(s,x) ~\Pi_n^{*k}(\diff s), \quad x\geq 0.
	\end{align*}
	To prove that such a representation holds for the \(q\)-scale functions \(W^{(q)}\) of \((L_t)_{t\geq0}\) as well we need to show three things: First, that weak convergence of the jump measures \(\{\Pi_n\}_{n\in\N}\) implies pointwise convergence of the respective scale functions \(\{W^{(q)}_n\}_{n\in\N}\), second, that the right-hand side of \eqref{representation} is well-defined for arbitrary jump measures and third, that the limits are consistent, i.e.
	\begin{align*}
	\lim_{n\to\infty} \frac{1}{c}  \sum_{k=0}^{\infty} \int^{x}_0 g_k(s,x) ~\Pi_n^{*k}(\diff s) = \frac{1}{c}  \sum_{k=0}^{\infty} \int^{x}_0 g_k(s,x) ~\Pi^{*k}(\diff s)
	\end{align*}
	for every \(x\geq0\). 
	
	For the first issue we use the notion of equicontinuity. Recall that a family of continuous functions \(\mathcal{F}\subset \mathcal{C}(\R_+)\) is said to be \emph{equicontinuous} if and only if for all \(x\in\R_+\) and all \(\epsilon>0\) there exists \(\delta=\delta(x,\epsilon)>0\) such that for all functions \(f\in\mathcal{F}\) and all \(y\in\R_+\) it holds
	\[
	|x-y|<\delta \implies |f(x)-f(y)|<\epsilon.
	\]

	\begin{lemma}\label{equic}
	For fixed \(q\geq0\) and fixed \(c,\lambda>0\) let \(\mathcal{W}^{(q)}_{c,\lambda}\) be the set of \(q\)-scale functions of spectrally negative compound Poisson processes with parameters \(c\) and \(\lambda\) and lattice distributed jumps. Then \(\mathcal{W}^{(q)}_{c,\lambda}\) is equicontinuous.
	\end{lemma}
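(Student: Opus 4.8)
The plan is to work directly with the representation \eqref{representation}, which by Corollary \ref{ordering} and the ensuing discussion is valid for \emph{every} lattice jump measure $\Pi$ with the fixed parameters $c,\lambda$, and to bound the local modulus of continuity of $W^{(q)}$ by a quantity that does not depend on $\Pi$. Fix $x\geq0$ and $\epsilon>0$, restrict attention to $y$ with $|x-y|\leq 1$, and treat the case $0\leq y\leq x$ first (the case $y>x$ will be completely analogous, with $x$ replaced by $\max(x,y)\leq x+1$ in all constants). Splitting the integration range in \eqref{representation} as $[0,x]=[0,y]\cup(y,x]$ and subtracting the same representation for $W^{(q)}(y)$, I would write
\[
W^{(q)}(x)-W^{(q)}(y)=\underbrace{\frac1c\sum_{k=0}^\infty\int_{[0,y]}\big(g_k(s,x)-g_k(s,y)\big)\,\Pi^{*k}(\diff s)}_{=:A}\;+\;\underbrace{\frac1c\sum_{k=0}^\infty\int_{(y,x]}g_k(s,x)\,\Pi^{*k}(\diff s)}_{=:B}.
\]
All interchanges of sum and integral, and the convergence of the series, are justified by the crude uniform estimate $\sum_{k}\int_{[0,x]}|g_k(s,x)|\,\Pi^{*k}(\diff s)\leq \e^{(q+2\lambda)x/c}$, obtained from $|g_k(s,x)|\leq \frac{(\lambda/c)^k}{k!}x^k\e^{(q+\lambda)x/c}$ on $s\in[0,x]$ together with the fact that each $\Pi^{*k}$ is a probability measure.

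For the term $A$ I would invoke the mean value theorem, $g_k(s,x)-g_k(s,y)=\partial_x g_k(s,\xi)\,(x-y)$ for some $\xi=\xi(s,k)\in[y,x]$, and use the explicit derivative
\[
\partial_x g_k(s,\xi)=\frac{(\lambda/c)^k}{k!}(-1)^{k}\,\e^{\frac{q+\lambda}{c}(\xi-s)}(\xi-s)^{k-1}\Big(\tfrac{q+\lambda}{c}(\xi-s)+k\Big).
\]
On $0\leq s\leq y\leq \xi\leq x$ I bound each factor by its value at $\xi-s=x$, so that $|\partial_x g_k(s,\xi)|\leq\frac{(\lambda/c)^k}{k!}\e^{\frac{q+\lambda}{c}x}x^{k-1}\big(\tfrac{q+\lambda}{c}x+k\big)$. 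Summing this series (it evaluates to $\big(\tfrac{q+\lambda}{c}+\tfrac{\lambda}{c}\big)\e^{(\lambda/c)x}$ after the two elementary pieces are resummed) and using $\Pi^{*k}([0,y])\leq1$ yields a bound of the form $|A|\leq C(x)\,|x-y|$, where $C(x)=\tfrac1c\big(\tfrac{q+2\lambda}{c}\big)\e^{(q+2\lambda)x/c}$ depends only on $q,c,\lambda,x$ and not on $\Pi$. In particular $A\to0$ as $y\to x$, uniformly over the whole family.

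The decisive term is $B$, and this is where I expect the only real subtlety to lie. Its $k=0$ summand, were it present, would contribute the $O(1)$ quantity $\tfrac1c\e^{(q+\lambda)x/c}$, which does \emph{not} vanish as $y\to x$ and would ruin equicontinuity. The observation that rescues the argument is that $\Pi^{*0}=\delta_0$ charges only the origin while $0\notin(y,x]$, so the $k=0$ term of $B$ is identically zero. For $k\geq1$ every $s\in(y,x]$ satisfies $0\leq x-s<x-y$, whence $|g_k(s,x)|\leq\frac{(\lambda/c)^k}{k!}(x-y)^k\e^{\frac{q+\lambda}{c}(x-y)}$; summing over $k\geq1$ and using $\Pi^{*k}((y,x])\leq1$ gives
\[
|B|\leq\frac1c\,\e^{\frac{q+\lambda}{c}(x-y)}\Big(\e^{\frac{\lambda}{c}(x-y)}-1\Big),
\]
which tends to $0$ as $y\to x$, again uniformly in $\Pi$. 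Combining the two estimates, I can then choose $\delta=\delta(x,\epsilon,q,c,\lambda)$, independent of $\Pi$, so small that $|A|+|B|<\epsilon$ whenever $|x-y|<\delta$, which is exactly the asserted equicontinuity of $\mathcal{W}^{(q)}_{c,\lambda}$. The heart of the proof is thus the half-open splitting at the boundary point $s=x$: it discards the dangerous mass-at-the-origin contribution while forcing every remaining term to carry a factor $(x-y)^k$ that drives $B$ to zero.
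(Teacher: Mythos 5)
Your proof is correct, but it takes a genuinely different route from the paper's. The paper does not use the series representation here at all: it writes $W^{(q)}(x)=\e^{(q+\lambda)x/c}w_q(x)$ as in Corollary \ref{auxiliary}, first proves $W^{(q)}(x)\leq \e^{(q+\lambda)x/c}$ (i.e.\ $w_q\leq 1$) by a first-passage argument in the spirit of Lemma \ref{recursivformula1}, and then reads off from the recursion \eqref{recurse3} that $w_q$ is Lipschitz with constant $\lambda$ independently of the lattice step and of $\Pi$; equicontinuity of $\mathcal{W}^{(q)}_{c,\lambda}$ is then inherited from that of the family $\{w_q\}$. You instead work directly from the representation \eqref{representation}, which at this point in the paper is indeed already established for every lattice jump measure (Proposition \ref{general_lattice}, Corollary \ref{ordering} and the discussion following it), so there is no circularity. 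Your estimates check out: the term $A$ yields a locally uniform Lipschitz bound via $\partial_x g_k$, the term $B$ is killed by the factor $(x-y)^k$ for $k\geq 1$, and your observation that the $k=0$ summand of $B$ vanishes because $\Pi^{*0}=\delta_0$ puts no mass on $(y,x]$ is exactly the point on which the decomposition stands or falls; you identify it correctly. What the paper's route buys is cleaner, global constants ($w_q\leq 1$ and Lipschitz constant $\lambda$ for $w_q$) and independence from the combinatorial derivation of \eqref{representation}; what your route buys is a purely analytic, self-contained argument that produces, as a by-product, an explicit locally uniform Lipschitz constant of order $\tfrac{q+2\lambda}{c^2}\,\e^{(q+2\lambda)(x+1)/c}$ for $W^{(q)}$ itself, uniformly over all lattice jump distributions.
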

	\begin{proof}
	As simultaneous scaling does not affect equicontinuity we may, once again, set \(c=1\). Let \(q,\lambda\geq0\) and consider a function \(W^{(q)}\in\mathcal{W}^{(q)}_{1,\lambda}\), i.e. \(W^{(q)}\) is the \(q\)-scale function of a compound Poisson process with drift \(c=1\), intensity \(\lambda>0\) and lattice jump measure \(\Pi\) on $(0,\infty)$. We begin by proving
	\begin{align}\label{less_e}
	W^{(q)}(x) \leq \e^{(q+\lambda) x}, \quad \text{for all } x\geq 0.
	\end{align}
	Let $\varepsilon>0$ be the step of the lattice distribution $\Pi$ and \(x>\varepsilon\). As in the proof of Lemma \ref{recursivformula1} write $\sigma$ for the time of the first jump of the corresponding Lévy process, then  it follows 
	\begin{align*}
	\E^\varepsilon[\e^{-q\tau_x^+}\bone_{\{\tau_x^+<\tau_0^-\}}] 
&\geq \E^\varepsilon[\e^{-q\tau_x^+} \bone_{\{\tau_x^+<\sigma\}}] 
	= \e^{(q+\lambda) (\varepsilon-x)},
	\end{align*}
	by arguments as in the proof of Lemma \ref{recursivformula1}. Thus via \eqref{scaleviapassage}
	\begin{align*}
	W^{(q)}(x) \leq \e^{(q+\lambda) (x-\varepsilon)}W^{(q)}(\varepsilon) =  \e^{(q+\lambda) x}, \quad x>\varepsilon.
	\end{align*}
	For  \(x<\varepsilon\) we see from \eqref{smalllatsol1} that \eqref{less_e} is clearly fulfilled and hence by continuity we have shown \eqref{less_e}.\\
	Set \(W^{(q)}(x)=\e^{(q+\lambda)x}w_q(x)\) for some continuous function \(w_q\) as in Corollary \ref{auxiliary}. The inequality \eqref{less_e} then implies \(w_q(x)\leq1\) for all \(x\geq0\). Moreover, from \cite[Cor. 2.5]{kuznetsov2011theory} we know that \(w_q\in \mathcal{C}^1((0,\infty) \setminus \mathrm{supp}(\Pi))\).\\ We are going to show that
	\begin{align*}
	\l| \frac{w_q(x_2)-w_q(x_1)}{x_2-x_1}\r| \leq \lambda
	\end{align*}
	holds for all \(x_1,x_2\in\R_+\) sufficiently close. From \eqref{recurse3} (but used here for general step size $\varepsilon>0$) we know
	$$w_q(x)= w_q(\lfloor x\rfloor_\varepsilon) - \lambda\sum_{y\in J} p_y  \e^{-(q+\lambda) y}\int^x_{\lfloor x\rfloor_\varepsilon} w_q(z-y) \diff z ~\Pi(\diff y).$$
	Thus for any \(x_1,x_2\in\R_+\) such that \(\lfloor x_1 \rfloor_\varepsilon=\lfloor x_2 \rfloor_\varepsilon\)
	\begin{align*}
	\l|\frac{w_q(x_2)-w_q(x_1)}{x_2-x_1}\r| &=
	\l|\lambda \sum_{y\in J} p_y \e^{-(q+\lambda) y}\frac1{x_2-x_1}\int^{x_2}_{x_1} w_q(z-y) \diff z   \r| \\
	&\leq  \lambda \sum_{y\in J} p_y \e^{-(q+\lambda) y}\frac1{x_2-x_1}\int^{x_2}_{x_1} | w_q(z-y)|\diff z \\
	&\leq \lambda \sum_{y\in J} p_y \e^{-(q+\lambda) y} \max \{w_q(z)| z \leq x_2\}\\
	&\leq \lambda,
	\end{align*}
	i.e. $|w_q(x_2)-w_q(x_1)|\leq \lambda |x_2-x_1|$, where the Lipschitz constant $\lambda$ does not depend on the step $\varepsilon$.
	Moreover, since  \(w_q\in \mathcal{C}^1((0,\infty) \setminus \mathrm{supp}(\Pi))\), the left derivatives \(\partial_-w_q(x)\) on lattice elements \(x\in\varepsilon\N\setminus\{0\}\) are given by
	\begin{align*}
	\partial_-w_q(x)=\lim_{x_n\nearrow x} w_q'(x_n)\leq \lambda.
	\end{align*}
	Hence, the directional derivatives of \(w_q\) are bounded everywhere, and therefore the family $\{w_q: w_q(x)=e^{-(q+\lambda)x} W^{(q)}(x), W^{(q)}(x)\in \mathcal{W}^{(q)}_{c,\lambda}\}$ is equicontinuous.\\
	However, since for any $x_1, x_2\in \RR_+$
	\begin{align*}
	|W^{(q)}(x_1)-W^{(q)}(x_2)|&= \e^{(q+\lambda)x_2} | \e^{(q+\lambda)(x_1-x_2)} w_q(x_1) - w_q(x_2)|\\
	&\leq \e^{(q+\lambda)x_2} \left( |(\e^{(q+\lambda)(x_1-x_2)} -1)w_q(x_1)| + | w_q(x_1) - w_q(x_2)|\right)\\
	&\leq  \e^{(q+\lambda)x_2} \left( |(\e^{(q+\lambda)(x_1-x_2)} -1)| + | w_q(x_1) - w_q(x_2)|\right)
	\end{align*}
	this implies equicontinuity of  \(\mathcal{W}^{(q)}_{1,\lambda}\) as well.
	\end{proof}

The lemma above now allows to prove pointwise convergence of scale functions of compound Poisson processes with drift in the case that the jump distributions converge weakly. 

	\begin{lemma}\label{scale_consistent}
	For all \(x\geq0\) it holds
	\begin{align*}
	\lim_{n\to\infty}W_n^{(q)}(x)= W^{(q)}(x).
	\end{align*}
	\end{lemma}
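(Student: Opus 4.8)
The plan is to combine the equicontinuity established in Lemma \ref{equic} with a locally uniform exponential bound on the family $\{W_n^{(q)}\}_{n\in\N}$ to extract, via the Arzel\`a--Ascoli theorem, locally uniformly convergent subsequences, and then to identify every subsequential limit with $W^{(q)}$ through the uniqueness of the Laplace transform in the defining relation \eqref{eq-def-Wq}. Note that this argument deliberately avoids the explicit representation \eqref{representation}, whose validity for general $\Pi$ is precisely the point treated separately afterwards.

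First I would fix a compact interval $[0,M]$. By Lemma \ref{equic} the family $\mathcal{W}^{(q)}_{c,\lambda}$, and in particular $\{W_n^{(q)}\}_{n\in\N}$, is equicontinuous; moreover the bound \eqref{less_e}, extended from $c=1$ to arbitrary drift $c>0$ through the scaling Lemmas \ref{scaling} and \ref{scaling1}, gives $W_n^{(q)}(x)\le \frac1c \e^{\frac{q+\lambda}{c} x}$ for every $n$ and every $x\ge0$, so the family is uniformly bounded on $[0,M]$. The Arzel\`a--Ascoli theorem then yields, from any prescribed subsequence, a further subsequence $(W_{n_k}^{(q)})_k$ converging uniformly on $[0,M]$; a diagonal argument over $M\in\N$ produces a subsequence converging locally uniformly on $\R_+$ to some continuous function $W_\infty$.

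Next I would identify $W_\infty$. Since $x\mapsto \e^{-\beta x}-1$ is bounded and continuous on $\R_+$ for each $\beta>0$, the weak convergence $\Pi_n\to\Pi$ gives $\psi_n(\beta)\to\psi(\beta)$ pointwise, where $\psi_n$ denotes the Laplace exponent of $L^{(n)}$. The uniform bound above forces $\Phi_n(q)\le (q+\lambda)/c$ for every $n$, so for each fixed $\beta>(q+\lambda)/c$ the defining relation
\[
\int_0^\infty \e^{-\beta x} W_{n_k}^{(q)}(x)\diff x = \frac{1}{\psi_{n_k}(\beta)-q}
\]
holds simultaneously for all $k$. On the left-hand side the integrands are dominated by the integrable function $\frac1c\e^{-(\beta-\frac{q+\lambda}{c})x}$, so dominated convergence together with $W_{n_k}^{(q)}\to W_\infty$ allows me to pass to the limit and obtain $\int_0^\infty \e^{-\beta x} W_\infty(x)\diff x = \frac{1}{\psi(\beta)-q}$ for all $\beta>(q+\lambda)/c$. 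By uniqueness of the Laplace transform this forces $W_\infty=W^{(q)}$ almost everywhere, hence everywhere, both functions being continuous.

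Finally, since every subsequence of $(W_n^{(q)})_n$ admits a further subsequence converging locally uniformly to the \emph{same} limit $W^{(q)}$, the full sequence converges to $W^{(q)}$ pointwise (indeed locally uniformly), which is the assertion. The main obstacle I anticipate is the bookkeeping that makes the domain of the Laplace relation uniform in $n$, namely verifying $\sup_n\Phi_n(q)\le(q+\lambda)/c$ so that a single half-line $\beta>(q+\lambda)/c$ serves all $n$ and the limit at once, together with checking that the exponential bound \eqref{less_e} survives the passage to general drift; once these uniformities are secured, the convergence $\psi_n\to\psi$ and the dominated-convergence step are routine.
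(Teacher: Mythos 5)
Your proof is correct, and it reaches the conclusion by a genuinely more self-contained route than the paper. Both arguments share the same skeleton -- equicontinuity from Lemma \ref{equic}, a locally uniform exponential bound, and convergence of the Laplace exponents $\psi_n(\beta)\to\psi(\beta)$ from weak convergence of $\Pi_n$ -- but the paper then delegates the decisive step to a cited convergence theorem for Laplace transforms (Arendt et al., Thm.~1.7.5), which states that equicontinuity plus a uniform bound $|W^{(q)}_n(x)|\le M\e^{\omega x}$ plus pointwise convergence of the transforms $\mathcal{L}(W^{(q)}_n)(\beta)=1/(\psi_n(\beta)-q)$ already yields locally uniform convergence of the functions. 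You instead prove exactly this special case by hand: Arzel\`a--Ascoli extracts locally uniformly convergent subsequences, dominated convergence identifies the Laplace transform of any subsequential limit as $1/(\psi(\beta)-q)$ on a half-line, uniqueness of the Laplace transform for continuous functions pins the limit down to $W^{(q)}$, and the standard subsequence principle upgrades this to convergence of the full sequence. What your version buys is transparency and a cleaner treatment of the uniformity issues: you use the explicit bound \eqref{less_e} (correctly transported to general drift via Lemma \ref{scaling1}) to get a bound uniform in $n$, whereas the paper derives its bound $M\e^{\omega x}$ somewhat tersely from finiteness of $\mathcal{L}(W^{(q)}_n)(\omega)$ for large $n$. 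One small remark: the inequality $\sup_n\Phi_n(q)\le(q+\lambda)/c$, which you obtain indirectly from the exponential bound, also follows in one line from $\psi_n(\beta)=c\beta+\lambda\int(\e^{-\beta x}-1)\Pi_n(\diff x)\ge c\beta-\lambda$, so any root of $\psi_n(\beta)=q$ lies below $(q+\lambda)/c$; this avoids having to invoke that $\Phi_n(q)$ is the abscissa of convergence of $\mathcal{L}(W^{(q)}_n)$.
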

	\begin{proof}
		By Lemma \ref{equic} the sequence \(\{W^{(q)}_n\}_{n\in\N}\) is equicontinuous. 
		Thus, by \cite[Thm. 1.7.5]{arendt2011vector}, if there are constants \(M,\omega>0\) such that for every \(n\in\N\) it holds \(|W^{(q)}_n(x)|\leq M\e^{\omega x}\) and if there exists \(c_0\geq\omega\) such that the sequence $\{ \mathcal{L}(W^{(q)}_n)(\beta) \}_{n\in\N}$ 
		converges pointwise as $n\to\infty$ for every \(\beta>c_0\), then the sequence \(\{W^{(q)}_n\}_{n\in\N}\) converges uniformly on every compact subset of \(\R_+\) as $n\to\infty$ and for every \(\beta>c_0\)
		\begin{align}\label{laplace}
		\lim_{n\to\infty} \mathcal{L}\l(W^{(q)}_n\r)(\beta) = \mathcal{L}\l(\lim_{n\to\infty}W^{(q)}_n\r)(\beta).
		\end{align}	
		Let \(\psi\) and \(\psi_n\) denote the Laplace exponents of the L\'evy processes \(L\) and \(L^{(n)}\), respectively. Then as \(\Pi_n\to \Pi\), $n\to\infty$, weakly, it follows that \(\psi_n(\beta) \to \psi(\beta)\), $n\to\infty$, for every \(\beta>\sup\{y:\psi(y)=q\}=\Phi(q)\).
		Further, from the definition of \(q\)-scale functions we know that its Laplace transforms are given by
		\[
		\mathcal{L}\l(W^{(q)}\r)(\beta) = \frac1{\psi(\beta)-q}, \quad \beta>\Phi(q),
		\] 
		 such that the above  implies pointwise convergence of $\{ \mathcal{L}(W^{(q)}_n)(\beta) \}_{n\in\N}$ for all $\beta>\Phi(q)$ as $n\to\infty$. \\
		Let \(\omega>\Phi(q)\). Since \(\mathcal{L}(W^{(q)}_n)(\omega)\in\R\) for \(n\) large enough there exist constants \(M>0\) and \(n_0\in\N\) such that \(|W_n^{(q)}(x)|\leq M\e^{\omega x}\) holds for every \(n>n_0\) and for all \(x\in\R_+\).\\
		Thus \(\{W^{(q)}_n\}_{n\in\N}\) converges uniformly on every compact subset of \(\R_+\) as $n\to\infty$, and \eqref{laplace} holds for \(\beta>\omega\). But as \(\omega>\Phi(q)\) is arbitrary, \eqref{laplace} holds for all \(\beta>\Phi(q)\), and hence the assertion follows.
	\end{proof}

 	We are now ready to give the proof of our main theorem as follows.

	\begin{proof}[Proof of Theorem \ref{main}.]
		With the previous Lemma \ref{scale_consistent}, proving well-definedness and consistency of the limits is tantamount to completing the proof.\\
		To see the well-definedness of the right hand side of \eqref{mainform}, i.e. of 
			\begin{align}\label{candidate}
		\frac{1}{c}  \sum_{k=0}^{\infty} \int^{x}_0 g_k(s,x) ~\Pi^{*k}(\diff s),
		\end{align}
		note that for fixed \(x\geq0\) a sufficient condition for the infinite series in \eqref{candidate} to converge is
	\begin{align*}
	\sum_{k=0}^{\infty} \max_{0\leq s\leq x} \l| g_k(s,x) \r| <\infty
	\end{align*}
	due to \(\Pi^{*k}\) being probability measures for every \(k\in\N\). However, by the definition of \(g_k\) in \eqref{g} we easily find constants \(C_1,C_2>0\) such that
	\begin{align*}
	\max_{0\leq s\leq x} \l| g_k(s,x) \r| < C_1\frac{C_2^k}{k!}.
	\end{align*}
	Hence, the expression \eqref{candidate} 
	is well-defined for all \(x\geq0\), and since we know 
	\begin{align}\label{w_approx}
	W^{(q)}(x)=\lim_{n\to\infty} \frac{1}{c}  \sum_{k=0}^{\infty} \int^{x}_0 g_k(s,x) ~\Pi_n^{*k}(\diff s),
	\end{align}
	from Lemma \ref{scale_consistent} it remains to prove that \eqref{candidate} coincides with the limit in \eqref{w_approx}. \\
	For fixed \(k\in\N\) we know that \(\Pi_n^{*k}\to\Pi^{*k}~(n\to\infty)\) weakly due to Lévy's continuity theorem. Indeed, the characteristic functions \(\hat\Pi_n\) of \(\Pi_n\) converge pointwise to the characteristic function \(\hat\Pi\) of \(\Pi\). Hence, we have \(\lim_{n\to\infty} \hat\Pi_n^k(x) = \hat\Pi^k(x)\) for every \(x\in\R\) and thereby \(\Pi_n^{*k}\to\Pi^{*k}~(n\to\infty)\) weakly. Thus, it follows
	\begin{align*}
	\lim_{n\to\infty} \frac{1}{c} \int^{x}_0 g_k(s,x) ~\Pi_n^{*k}(\diff s) = \frac{1}{c}\int^{x}_0 g_k(s,x) ~\Pi^{*k}(\diff s)
	\end{align*}
	for all \(x\in\R\) and every \(k\in\N\) since \(g_k\) is continuous and bounded.
	But this, the fact that \(C_1\) and \(C_2\) are independent of the jump measure, and $\lim_{k\to\infty} \Pi^{\ast k}([0,x]) \to0$ as $k\to \infty$, allow us to find \(n,m\in\N\) large enough such that for any $\epsilon>0$ 
		\begin{align*}
	\lefteqn{\Bigg| \frac{1}{c}  \sum_{k=0}^{\infty} \int^{x}_0 g_k(s,x) ~\Pi_n^{*k}(\diff s) - \frac{1}{c}  \sum_{k=0}^{\infty} \int^{x}_0 g_k(s,x) ~\Pi^{*k}(\diff s)\Bigg|}\\
	& \leq \l| \frac{1}{c}  \sum_{k=0}^{m} \int^{x}_0 g_k(s,x) ~\Pi_n^{*k}(\diff s) - \frac{1}{c}  \sum_{k=0}^{m} \int^{x}_0 g_k(s,x) ~\Pi^{*k}(\diff s)\r| \\
	&\quad +
	\l|  \frac{1}{c}  \sum_{k=m+1}^{\infty} \int^{x}_0 g_k(s,x) ~\Pi_n^{*k}(\diff s)\r|  +\l|\frac{1}{c}  \sum_{k=m+1}^{\infty} \int^{x}_0 g_k(s,x) ~\Pi^{*k}(\diff s)\r|  \\
	&\leq \epsilon+\epsilon+\epsilon,
	\end{align*}
	which finishes the proof.
	\end{proof}

\section{Primitives and Derivatives}\label{S4}
\setcounter{equation}{0}

\subsection{Smoothness of scale functions}\label{S41}

Recall that we denote by \(\overline{\Pi}\) the cumulative distribution function of the jump measure \(\Pi\) and that we abbreviate differentiation as $\partial$ and use $\partial_x:=\frac{\partial}{\partial x}$.

\begin{proposition}\label{smoothness}
For \(q\geq0\) let \(W^{(q)}\) be the \(q\)-scale function of the spectrally negative compound Poisson process in \eqref{eq-specialtype}. 
	\begin{enumerate}
	\item[(a)] [First order derivatives] We have
\begin{align}\label{D+}
\partial_{+} W^{(q)}(x)&=\frac{1}{c} \sum_{k=0}^{\infty} \int_{[0,x]} \partial_x g_k(s,x) ~\Pi^{*k}(\diff s), \quad x\geq 0,\\\label{D-}
\partial_{-} W^{(q)}(x)&=\frac{1}{c}\sum_{k=0}^{\infty} \int_{[0,x)} \partial_x g_k(s,x) ~\Pi^{*k}(\diff s), \quad x>0.
\end{align}
In particular $W^{(q)}$ is (continuously) differentiable in $x_0\geq 0$ if and only if  $\overline{\Pi}$ is continuous in $x_0$, and $W^{(q)}\in \cC^1(0,\infty)$ if and only if $\overline{\Pi}\in \cC^0(0,\infty)$.
\item[(b)] [Higher order derivatives]
For \(n\geq 1\) and  for all $x \geq 0$ such that the following exists,
\begin{align}\label{derivative}
\partial^{n+1} W^{(q)}(x)&=\frac{1}{c}\l( \sum_{k=1}^{n}\sum_{j=k}^{n}C(k,j) \partial^{n+1-j} \overline{\Pi}^{*k}(x) +  \sum_{k=0}^{\infty}  \int_{[0,x]} \partial_x^{n+1} g_k(s,x)\,\diff \overline{\Pi}^{*k}( s)\r)
\end{align}
with constants
\begin{equation} \label{propconstants}
C(k,j)=(\partial_x^j g_k(s,x))\Big|_{s=x}=\begin{cases}
\l(\frac{\lambda }{c}\r)^k{j \choose k}\l(\frac{q+\lambda}{c}\r)^{j-k}(-1)^k, &j\geq k, \\
0,& j<k,
\end{cases}, \quad k,j\in\N.
\end{equation}
In particular $W^{(q)}$ is twice (continuously) differentiable in $x_0\geq 0$ if and only if $\overline{\Pi}$ is (continuously) differentiable in $x_0$. For $n>1$, if $\overline{\Pi}\in\cC^1(0,\infty)$, then $W^{(q)}$ is \((n+1)\) times (continuously) differentiable at \(x_0\geq 0\) 
if and only if \(\overline{\Pi}\) is \(n\) times (continuously) differentiable at \(x_0\).\\
Hence \(W^{(q)} \in \cC^{n+1}(0,\infty)\) if and only if $\overline{\Pi}\in\cC^n(0,\infty)$, $n\geq 1$. 
\end{enumerate}
\end{proposition}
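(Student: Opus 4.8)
The plan is to differentiate the series representation \eqref{mainform} term by term, keeping track of the two effects of each differentiation in $x$: the explicit $x$-dependence of $g_k(s,x)$, which produces $\partial_x g_k$, and the moving upper endpoint, which produces a Leibniz boundary contribution. The crucial starting point is that $g_k(x,x)=0$ for every $k\geq1$ because of the factor $(s-x)^k$ in \eqref{g}, while the $k=0$ term is integrated against $\Pi^{*0}=\delta_0$, which carries no mass at $x>0$. Hence the first differentiation produces no boundary term whatsoever, which is exactly \eqref{D+} and \eqref{D-}; the two one-sided derivatives differ only by whether the atom $\{x\}$ is included, so that $\partial_{+}W^{(q)}(x)-\partial_{-}W^{(q)}(x)=\frac1c\sum_k C(k,1)\,\Pi^{*k}(\{x\})$. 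Since $C(k,1)=0$ for all $k\neq1$ while $C(1,1)=-\lambda/c\neq0$, this difference is a nonzero multiple of $\Pi(\{x\})$, i.e. of the jump of $\overline\Pi$ at $x$, and the first-order equivalence in (a) follows.

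For part (b) I would argue by induction on $n$, differentiating the expression for $\partial^{n}W^{(q)}$ into that for $\partial^{n+1}W^{(q)}$. At each step two things happen. Differentiating the integral remainder $\frac1c\sum_k\int_{[0,x]}\partial_x^{n}g_k\,\diff\overline\Pi^{*k}$ replaces $\partial_x^{n}g_k$ by $\partial_x^{n+1}g_k$ inside the integral and, by Leibniz, creates the new boundary term $\frac1c\sum_k C(k,n)\,\partial\overline\Pi^{*k}(x)$ with $C(k,n)=\partial_x^{n}g_k(s,x)|_{s=x}$; as $C(k,n)=0$ for $k>n$ this is a finite sum and supplies precisely the $j=n$ diagonal of the double sum in \eqref{derivative}. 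Simultaneously, every boundary term $C(k,j)\partial^{n-j}\overline\Pi^{*k}(x)$ already present is differentiated once more into $C(k,j)\partial^{n+1-j}\overline\Pi^{*k}(x)$, reproducing the remaining entries with $j<n$. The constants $C(k,j)=\partial_x^{j}g_k(s,x)|_{s=x}$ are then computed directly from \eqref{g} by the product rule, giving \eqref{propconstants}.

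Two analytic facts legitimise every step. The first is the factorial bound $\max_{0\le s\le x}|\partial_x^{m}g_k(s,x)|\le C_1C_2^{k}/k!$, proved exactly as the bound on $g_k$ in the proof of Theorem \ref{main}; it permits differentiation under the infinite sum and guarantees that the integral remainder in \eqref{derivative} is finite and continuous for every $x$. The second is the identification of the Leibniz boundary term as the diagonal value $\partial_x^{m}g_k(s,x)|_{s=x}$ times the density $\partial\overline\Pi^{*k}(x)$, which presupposes that $\overline\Pi^{*k}$ is differentiable. This is where the standing hypothesis $\overline\Pi\in\cC^1$ enters: it makes each $\Pi^{*k}$, $k\ge1$, absolutely continuous with continuous density $\pi^{*k}$, and, bootstrapping with the inductive hypothesis $\overline\Pi\in\cC^{n-1}$, it renders all the $k\ge2$ boundary terms in \eqref{derivative} well defined and continuous.

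Finally the equivalences are read off from \eqref{derivative} by isolating the single most demanding term. The only summand involving the $n$-th derivative of $\overline\Pi$ itself is the $k=1,\,j=1$ term $C(1,1)\partial^{n}\overline\Pi(x)$, whose coefficient $C(1,1)=-\lambda/c$ is nonzero; every other boundary term involves either a strictly lower-order derivative of $\overline\Pi$ or a derivative of a strictly smoother convolution $\overline\Pi^{*k}$ with $k\ge2$, and the integral remainder is always continuous. Hence $\partial^{n+1}W^{(q)}(x_0)$ exists (and is continuous) if and only if $\partial^{n}\overline\Pi(x_0)$ does, which is the claimed equivalence, both pointwise and, summing over $x_0$, on all of $(0,\infty)$. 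The hardest part is precisely the second analytic fact in its \emph{local} form: because convolution is nonlocal, local smoothness of $\overline\Pi$ at a single $x_0$ does not transfer verbatim to $\overline\Pi^{*k}$, so controlling the regularity of the $k\ge2$ boundary terms sharply enough to obtain the pointwise statement — rather than merely the global one of \cite[Thm.~3]{chankypsavov} — is the delicate point requiring the most care; the differentiation-under-the-sum and the Leibniz bookkeeping are, by comparison, routine once the factorial bound is in hand.
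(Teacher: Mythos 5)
Your plan is the paper's plan: differentiate \eqref{mainform} term by term, justify the interchange by a factorial bound and uniform convergence (the paper's Lemma \ref{uniformconv} plus \cite[Thm.~7.17]{rudin}), identify the Leibniz boundary contribution as $C(k,n)\,\partial\overline{\Pi}^{*k}(x)$ with $C(k,n)=\partial_x^n g_k(s,x)\big|_{s=x}$, run the induction on $n$, and read off the equivalences from the single term $C(1,1)\partial^n\overline{\Pi}$. Two points need tightening. First, in part (a) your statement that ``the first differentiation produces no boundary term whatsoever'' is inconsistent with your next sentence, in which the two one-sided derivatives differ: the correct mechanism (the paper's Lemma \ref{abcd}(a)) is that the \emph{right} difference quotient over $(x,x+h]$ vanishes for all $k\geq1$ (for $k=1$ only by right-continuity of $F^{*k}$, not because $g_1(x,x)=0$ alone), whereas the \emph{left} difference quotient over $(x-h,x]$ contributes $\frac{\lambda}{c}\bigl(F(x)-F(x-)\bigr)$ when $k=1$; this boundary term exactly cancels the diagonal value $C(1,1)\Pi(\{x\})$ and is what turns $\int_{[0,x]}$ into $\int_{[0,x)}$ in \eqref{D-}. (Also, $C(0,1)=\frac{q+\lambda}{c}\neq0$, contrary to your parenthetical claim; the $k=0$ term is harmless only because $\Pi^{*0}(\{x\})=0$ for $x>0$.) Second, you correctly flag that the local regularity of the $k\geq2$ boundary terms is the delicate point, but you leave it open; the paper closes it by using the standing hypothesis $\overline{\Pi}\in\cC^1(0,\infty)$ to write $\partial^{j}\overline{\Pi}^{*k}(x_0)=\pi^{*(k-1)}*\bigl(\partial^{j-1}\pi\bigr)(x_0)$, so that $j$-fold (continuous) differentiability of $\overline{\Pi}$ at $x_0$ transfers to every $\overline{\Pi}^{*k}$, $k\geq1$. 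Without some such argument the pointwise ``if and only if'' in (b) does not follow from merely observing that the integral remainder is continuous, so you should supply this step rather than defer it.
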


To prove Proposition \ref{smoothness} we will swap differentiation and summation. By an application of the Moore-Osgood Theorem (cf. \cite[Thm. 7.17]{rudin}) this is possible if the summands themselves are differentiable and if the series converges uniformly. These points will therefore be treated in the upcoming three preparatory lemmas. 

\begin{lemma}\label{diffg} Consider $g(s,x)$, $0\leq s, x$, from \eqref{g}. Then for all $k,n\in\NN$ it holds
 \begin{align} \label{dgdx}
		\partial_x^n g_k(s,x)=\l(\frac{\lambda}{c}\r)^k \e^{-\frac{q+\lambda}{c}(s-x)}\sum_{i=0}^{n\wedge k} {n\choose i} \l(\frac{q+\lambda}{c}\r)^{n-i}\frac{(s-x)^{k-i}}{(k-i)!}(-1)^i, \quad s,x\geq 0,
		\end{align}
	 such that in particular $(\partial_x^n g_k(s,x))\big|_{s=x}$ is given by \eqref{propconstants} and is independent of \(x\).
\end{lemma}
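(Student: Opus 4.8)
The plan is to prove Lemma \ref{diffg} by induction on $n$, the order of differentiation, with $k$ held fixed. The base case $n=0$ is exactly formula \eqref{g}, since the sum on the right-hand side of \eqref{dgdx} collapses to its single $i=0$ term, giving $(\lambda/c)^k \e^{-\frac{q+\lambda}{c}(s-x)}(s-x)^k/k!$, which is $g_k(s,x)$. So the real work is the inductive step: assuming \eqref{dgdx} holds for some $n$, I differentiate the stated expression once more in $x$ and check that the result matches the formula with $n$ replaced by $n+1$.

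First I would carry out the differentiation term by term under the sum. Each summand is a product of the exponential $\e^{-\frac{q+\lambda}{c}(s-x)}$ and a power $(s-x)^{k-i}$, so applying $\partial_x$ by the product rule yields two contributions: one where the exponential is differentiated, producing a factor $\frac{q+\lambda}{c}$ (note $\partial_x \e^{-\frac{q+\lambda}{c}(s-x)} = \frac{q+\lambda}{c}\e^{-\frac{q+\lambda}{c}(s-x)}$), and one where the power is differentiated, producing $-(k-i)(s-x)^{k-i-1}$, i.e. a sign $(-1)$ and a shift of the exponent. Reindexing the second contribution (replacing $i$ by $i-1$) so that both contributions carry the same power $(s-x)^{k-(i)}$, the two families of terms combine. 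The coefficient bookkeeping then reduces to the Pascal-type identity $\binom{n}{i}+\binom{n}{i-1}=\binom{n+1}{i}$ together with matching the powers of $\frac{q+\lambda}{c}$ and the sign $(-1)^i$; this is exactly what produces the claimed formula for $n+1$. The only subtlety is checking the summation ranges at the endpoints so that the upper limit correctly becomes $(n+1)\wedge k$: the differentiated-power term vanishes when $i=k$ (since the exponent $k-i$ is then zero, its derivative is zero), which caps the contribution at the right index, and the two boundary terms of the shifted sums are precisely the ones that are either absent or zero.

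For the final claim, setting $s=x$ kills every term in the sum \eqref{dgdx} except the one with $k-i=0$, i.e. $i=k$, because $(s-x)^{k-i}\big|_{s=x}=0$ whenever $k-i>0$ and equals $1$ when $k-i=0$ (and the exponential equals $1$ at $s=x$). This surviving term exists only if $k\le n$, in which case it reads $(\lambda/c)^k \binom{n}{k}\big(\frac{q+\lambda}{c}\big)^{n-k}(-1)^k$, matching \eqref{propconstants}; if $k>n$ the index $i=k$ lies beyond the range $i\le n\wedge k=n$, so the sum is empty and the value is $0$, again matching \eqref{propconstants}. Independence of $x$ is then manifest from the closed form. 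I do not anticipate a genuine obstacle here; the main point requiring care is the reindexing and the endpoint analysis of the summation range in the inductive step, which is where an off-by-one error could slip in if the $n\wedge k$ cap is not tracked carefully.
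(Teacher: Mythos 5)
Your proof is correct and matches the paper in substance: the paper simply declares the formula ``immediate from the definition of $g$'' (i.e.\ the Leibniz rule applied to the product of the exponential and the power), and your induction on $n$ is just that computation written out explicitly, with the reindexing, Pascal identity, and the $(n+1)\wedge k$ endpoint check all handled correctly. The evaluation at $s=x$ and the $k>n$ case are also argued exactly as the paper intends.
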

\begin{proof}
	This is immediate from the definition of $g$.
\end{proof}

\begin{lemma}\label{abcd}
	Let \(F\) be a cumulative distribution function with \(F(0)=0\).
	\begin{enumerate}
		\item[(a)] [First order derivatives] For all \(k\in\N\) 
		\begin{align}\label{lem_b1}
		\partial_{+} \int_{[0,x]} g_k(s,x)\diff F^{*k}(s)=\int_{[0,x]}\partial_x g_k(s,x) ~\diff F^{*k}(s),\quad x\geq0.
		\end{align} 
		The left derivative differs from the right derivative at \(x_0\in\R_+\) if and only if \(k=1\) and \(F(x_0-)\neq F(x_0)\). In this case it is given by
		\begin{align}\label{lem_b2}
		\partial_{-} \int_{[0,x_0]} g_1(s,x_0)\diff F(s)=\int_{[0,x_0)}\partial_x g_1(s,x_0) ~\diff F(s).
		\end{align}
		\item[(b)][Higher order derivatives] If \(F\in\C^0\), then for all \(k\in\N\), $n\in\NN$, with $C(k,n)$ as in \eqref{propconstants},
		\begin{align}\label{lem_c}
		\partial_{\pm} \int_{[0,x]} \partial_x^n g_k(s,x)\diff F^{*k}(s)&=  \int_{[0,x]}\partial^{n+1}_x g_k(s,x) ~\diff F^{*k}(s) + C(k,n) \partial_{\pm} F^{*k}(x),
		\end{align}
		for all $x\geq 0$ such that $\partial_{\pm} F^{\ast k}(x)$ exists. In particular for \(x_0\in\R_+\) such that \(\partial F^{\ast k}(x_0)\) does not exist, the function \(x\mapsto \int_{[0,x]} \partial_x^n g_k(s,x)\diff F^{\ast k}(s)\) is not differentiable at \(x_0\). 
	\end{enumerate}
\end{lemma}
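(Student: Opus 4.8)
The plan is to differentiate directly from the one-sided difference quotients, splitting each increment into an \emph{interior} part, coming from the $x$-dependence of the integrand over a frozen range, and a \emph{boundary} part, coming from the $F^{*k}$-mass swept in near the moving endpoint $s=x$. Writing $H(x):=\int_{[0,x]}\partial_x^n g_k(s,x)\,\diff F^{*k}(s)$ (so that part~(a) is the case $n=0$), the forward increment is
\begin{align*}
H(x+\delta)-H(x)=\int_{[0,x]}\!\big[\partial_x^n g_k(s,x+\delta)-\partial_x^n g_k(s,x)\big]\diff F^{*k}(s)+\int_{(x,x+\delta]}\!\partial_x^n g_k(s,x+\delta)\,\diff F^{*k}(s),
\end{align*}
with the analogous backward splitting over the frozen range $[0,x-\delta]$ and boundary range $(x-\delta,x]$. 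Dividing by $\delta$, the interior term converges to $\int\partial_x^{n+1}g_k\,\diff F^{*k}$ by dominated convergence, the difference quotients of $\partial_x^n g_k$ being bounded uniformly on the (finite-measure) range by the mean value theorem and the explicit formula \eqref{dgdx} of Lemma~\ref{diffg}. All the content sits in the boundary part.

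For part~(a), the decisive feature is that $g_k$ vanishes on the diagonal to order $k$: by \eqref{g}, $|g_k(s,x)|\le C|s-x|^k$ near $s=x$, and $g_k(x,x)=0$ for $k\ge1$. In the backward boundary term $\delta^{-1}\int_{(x-\delta,x]}g_k(s,x)\,\diff F^{*k}(s)$ I would split off the endpoint $s=x$, which contributes $g_k(x,x)\,F^{*k}(\{x\})=0$, leaving the open interval $(x-\delta,x)$ on which $|g_k|\le C\delta^k$ and whose mass $F^{*k}((x-\delta,x))\to0$; hence the boundary term vanishes for every $k\ge1$ (the forward term vanishes likewise by right-continuity). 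Consequently the forward derivative integrates over $[0,x]$ while the backward interior term integrates over $[0,x)$, because $[0,x-\delta]\nearrow[0,x)$, so the two one-sided derivatives differ by exactly the atomic contribution $\partial_x g_k(x,x)\,F^{*k}(\{x\})$. By \eqref{propconstants}, $\partial_x g_k(x,x)=C(k,1)$, which equals $-\lambda/c$ for $k=1$ and $0$ for $k\ge2$; this simultaneously gives \eqref{lem_b1}, the claim that left and right derivatives disagree precisely when $k=1$ and $F$ charges $\{x_0\}$, and the explicit left derivative \eqref{lem_b2}. The case $k=0$ is trivial, as $F^{*0}=\delta_0$ makes $H$ smooth.

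For part~(b), continuity of $F$ propagates to $F^{*k}\in\cC^0$ (convolving an atomless measure with any measure stays atomless), so $F^{*k}$ has no atoms and the interior term integrates over $[0,x]$ from both sides with no discrepancy. The new feature is that the integrand no longer vanishes on the diagonal but satisfies $\partial_x^n g_k(s,x)\big|_{s=x}=C(k,n)$ by Lemma~\ref{diffg}. Subtracting this constant inside the boundary integral, the constant part yields $C(k,n)\,F^{*k}(\text{interval})/\delta\to C(k,n)\,\partial_\pm F^{*k}(x)$ whenever $\partial_\pm F^{*k}(x)$ exists, while the remainder is bounded by $\big(\sup_{\text{interval}}|\partial_x^n g_k-C(k,n)|\big)\cdot F^{*k}(\text{interval})/\delta$, a product of a factor vanishing by continuity and a factor tending to the finite $\partial_\pm F^{*k}(x)$, hence negligible. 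This gives \eqref{lem_c}. For the final assertion I would note that the integral term in \eqref{lem_c} is continuous in $x$, and that whenever $C(k,n)\ne0$ — equivalently $n\ge k$, by \eqref{propconstants} — the boundary part equals $\big(C(k,n)+o(1)\big)\,F^{*k}(\text{interval})/\delta$, so $\partial_\pm H(x_0)$ exists if and only if $\partial_\pm F^{*k}(x_0)$ does; thus failure of $\partial F^{*k}(x_0)$ to exist forces non-differentiability of $H$ at $x_0$.

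I expect the boundary part to be the main obstacle, precisely because three pieces of bookkeeping must be tracked at once: the exact order to which the integrand vanishes (or fails to vanish) at the diagonal $s=x$, the atoms versus continuity of $F^{*k}$, and the open-versus-closed status of the limiting ranges $[0,x)$ and $[0,x]$. The delicate case is $k=1$ in part~(a): there the integrand vanishes only to first order, so the boundary term still dies, yet the endpoint atom survives through the $[0,x]$-versus-$[0,x)$ discrepancy and produces the left/right gap; everything else reduces to the uniform bounds supplied by \eqref{dgdx}.
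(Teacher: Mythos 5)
Your proposal is correct and follows essentially the same route as the paper's proof: the same splitting of the one-sided difference quotients into an interior part (handled by dominated convergence with the uniform bounds from \eqref{dgdx}) and a boundary part near $s=x$ (handled via the order of vanishing of $g_k$, respectively the diagonal values $C(k,n)$ from \eqref{propconstants}), with the $[0,x)$-versus-$[0,x]$ bookkeeping producing the left/right discrepancy for $k=1$. The only local difference is in part (b), where the paper extracts $C(k,n)\,\partial_{\pm}F^{*k}(x)$ from the boundary integral by integration by parts while you subtract the diagonal constant and invoke uniform continuity --- both are valid, and you are in fact slightly more precise than the stated lemma in observing that the final non-differentiability assertion really requires $C(k,n)\neq 0$, i.e.\ $n\geq k$.
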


\begin{proof}
	Note first that for all \(n\in\N\) it holds
	\begin{align}\label{lem_a}
	\partial_x \int_{[0,x]} \partial^n_x g_0(s,x)\diff F^{*0}(s)&= \int_{[0,x]}\partial_x^{n+1} g_0(s,x) ~\diff F^{*0}(s), \quad x\geq0,
	\end{align} due to \eqref{dgdx} and \(F^{*0}=\mathds{1}_{[0,\infty)}\). But since \eqref{lem_b1} - \eqref{lem_c} are all covered by \eqref{lem_a} if \(k=0\) we may assume \(k>0\) in the following. \\
	To prove (a) consider 
	\begin{align}
	\lefteqn{\partial_+ \int_{[0,x]} g_k(s,x)\diff F^{*k}(s)} \nonumber\\
	&=\lim_{h\to0+}\frac1{h}\l( \int_{[0,x+h]}  g_k(s,x+h) ~ \diff F^{*k}(s) -\int_{[0,x]}  g_k(s,x) ~ \diff F^{*k}(s)\r) \nonumber \\
	&=\lim_{h\to0+}\frac1{h}\l( \int_{[0,x+h]}  g_k(s,x+h) ~ \diff F^{*k}(s) -\int_{[0,x]}   g_k(s,x+h) ~ \diff F^{*k}(s) \r)  \nonumber \\
	&\quad +\lim_{h\to0+}  \int_{[0,x]} \frac{  g_k(s,x+h)-  g_k(s,x) }{h} ~ \diff F^{*k}(s) \nonumber \\
	&= \lim_{h\to0+} \frac1h\int_{(x,x+h]}  g_k(s,x+h)~\diff F^{*k}(s) + \int_{[0,x]}\partial_x g_k(s,x) ~\diff F^{*k}(s). \label{diffdecomp}
	\end{align}
	The first term vanishes since we may estimate for \(h<1\), $k\geq 1$
	\begin{align*}
	\lim_{h\to0+} \frac1h\int_{(x,x+h]}|g_k(s,x+h)|~\diff F^{*k}(s)&\leq \lim_{h\to0+} \frac{h^k}h\int_{(x,x+h]}\frac{(\lambda/c)^k }{k!}~\diff F^{*k}(s) =0,
	\end{align*}
	where for $k=1$ we use the fact that \(F^{*k}\) is right-continuous. This proves \eqref{lem_b1}. An analogue argument for the left derivative leads to
	\begin{align*}
	\partial_- \int_{[0,x]} g_k(s,x)\diff F^{*k}(s)&= \lim_{h\to0+} \frac1h\int_{(x-h,x]}g_k(s,x-h)~\diff F^{*k}(s) + \int_{[0,x]}\partial_x g_k(s,x) ~\diff F^{*k}(s).
	\end{align*}
	Again the first term vanishes for \(k>1\) regardless of the continuity of \(F\). 
	However for \(k=1\) it holds
	\begin{align*}
	\lim_{h\to0+} \frac1h\int_{(x-h,x]}g_1(s,x-h)~\diff F(s)& =\frac{\lambda }{c}(F(x)-F(x-)) = -(\partial_x g_1(s,x))\Big|_{s=x} (F(x)-F(x-))
	\end{align*}
	by \eqref{propconstants}, which proves \eqref{lem_b2}.\\
	For (b), by the same decomposition as in \eqref{diffdecomp}
	\begin{align*}
	\partial_+ \int_{[0,x]} \partial_x^n g_k(s,x)\diff F^{*k}(s)
&=	 \lim_{h\to0+} \frac1h\int_{(x,x+h]} \partial_x^n g_k(s,x+h)~\diff F^{*k}(s) + \int_{[0,x]}\partial_x^{n+1} g_k(s,x) ~\diff F^{*k}(s).\end{align*}
To determine the limit we use partial integration and obtain
\begin{align*}
\lefteqn{	\lim_{h\to0+} \frac1h\int_{(x,x+h]} \partial_x^n g_k(s,x+h)~\diff F^{\ast k}(s)} \\ 
&=\lim_{h\to0+} \frac1h \left( \left[\partial_x^n g_k(s,x+h) F^{\ast k}(s) \right]_{s=x}^{x+h} - \int_{(x,x+h]} \partial^{n+1}_x g_k(s,x+h) F^{\ast k}(s) \diff s \right)\\
&=\lim_{h\to0+} \frac{\partial_x^n g_k(x+h,x+h)\cdot F^{\ast k}(x+h) - \partial_x^n g_k(x,x+h)\cdot F^{\ast k}(x) }{h}  - \partial^{n+1}_x g_k(x,x) F^{\ast k}(x) \\
&=\partial_x^n g_k(x+h,x+h) \lim_{h\to0+} \frac{F^{\ast k}(x+h) - F^{\ast k}(x)}{h} \\
&\quad + F^{\ast k}(x)\left(\lim_{h\to 0+} \frac{ \partial_x^n g_k(x+h,x+h) - \partial_x^n g_k(x,x+h) }{h}  - \partial^{n+1}_x g_k(x,x) \right) \\
&= C(k,n)  \lim_{h\to0+} \frac{F(x+h)-F(x)}{h},
\end{align*}
since $\partial_s \partial^n_x g_k(s,x) = \partial^{n+1}_x g_k(s,x)$ as can be seen from \eqref{dgdx}. The same computation for the left derivative finishes the proof.
\end{proof}

\begin{lemma}\label{uniformconv}
	Let \(F\) be a cumulative distribution function with \(F(0)=0\) and let \(n\in\N\). Then the series
	\begin{align*}
	\sum_{k=0}^{\infty}  \int_{[0,x]} \partial_x^n g_k(s,x) ~\diff F^{*k}(s)
	\end{align*}
	converges uniformly on every compact subset of \(\R_+\).
\end{lemma}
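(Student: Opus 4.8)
The plan is to apply the Weierstrass $M$-test on an arbitrary compact interval $[0,X]\subset\R_+$, so that uniform convergence on every compact set follows by letting $X$ vary. Since each $F^{*k}$ is a probability measure, for every $x\in[0,X]$ we have
\[
\left|\int_{[0,x]}\partial_x^n g_k(s,x)\,\diff F^{*k}(s)\right|
\leq \max_{0\leq s\leq x}\bigl|\partial_x^n g_k(s,x)\bigr|\cdot F^{*k}([0,x])
\leq \max_{0\leq s\leq x\leq X}\bigl|\partial_x^n g_k(s,x)\bigr|,
\]
so it suffices to produce a summable sequence $(M_k)_k$ dominating the right-hand side uniformly in $x$.

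First I would feed the explicit expression \eqref{dgdx} from Lemma \ref{diffg} into this maximum. For $0\leq s\leq x\leq X$ we have $s-x\in[-X,0]$, whence $\e^{-\frac{q+\lambda}{c}(s-x)}\leq \e^{\frac{q+\lambda}{c}X}$ and $|s-x|^{k-i}\leq X^{k-i}$; combining these with the elementary bound $\tfrac{1}{(k-i)!}\leq\tfrac{1}{(k-n)!}$ valid for $0\leq i\leq n\leq k$, the binomial theorem collapses the inner sum and yields, for $k\geq n$,
\[
\max_{0\leq s\leq x\leq X}\bigl|\partial_x^n g_k(s,x)\bigr|
\leq C_n\left(\frac{\lambda}{c}\right)^k\frac{X^{k-n}}{(k-n)!},
\qquad
C_n:=\e^{\frac{q+\lambda}{c}X}\left(1+\frac{(q+\lambda)X}{c}\right)^n,
\]
with $C_n$ independent of $k$.

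I would then set $M_k$ equal to this bound for $k\geq n$ and treat the finitely many terms with $k<n$ separately (each being bounded on $[0,X]$). The substitution $j=k-n$ gives
\[
\sum_{k\geq n}M_k
= C_n\left(\frac{\lambda}{c}\right)^n\sum_{j\geq 0}\frac{1}{j!}\left(\frac{\lambda X}{c}\right)^{j}
= C_n\left(\frac{\lambda}{c}\right)^n\e^{\lambda X/c}<\infty,
\]
so that $\sum_k M_k<\infty$ and the $M$-test delivers uniform convergence on $[0,X]$; since $X>0$ was arbitrary, the claim follows. The only real obstacle is the bookkeeping in the derivative bound: one must verify that the factorial $(k-i)!$ appearing in \eqref{dgdx} ultimately dominates both the geometric factor $(\lambda/c)^k$ and the polynomial powers of $X$, and it is precisely the shift $j=k-n$ that exhibits the dominating series as a rescaled exponential.
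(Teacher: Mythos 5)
Your proof is correct and follows the same route the paper intends: the paper's own proof of this lemma is a one-line assertion that the claim "follows immediately" from the form of $\partial_x^n g_k$ in Lemma \ref{diffg} and the fact that each $F^{*k}$ is a distribution function, and your Weierstrass $M$-test computation is exactly the detailed version of that argument. The key bound $\max_{0\leq s\leq x\leq X}|\partial_x^n g_k(s,x)|\leq C_n(\lambda/c)^k X^{k-n}/(k-n)!$ and the shift $j=k-n$ are carried out correctly.
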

\begin{proof}
This follows immediately from the form of  $\partial_x^n g_k(s,x)$ determined in Lemma \ref{diffg} and the fact that $F^{\ast l}$ is a cumulative distribution function for all $k\in\NN$.
\end{proof}

We are now ready to prove Proposition \ref{smoothness} by simply bringing the above arguments in the right order.

\begin{proof}[Proof of Proposition \ref{smoothness}.]
	Note first that \eqref{D+} and \eqref{D-} follow from \eqref{lem_b1} and \eqref{lem_b2} together with Lemma \ref{uniformconv} and \cite[Thm. 7.17]{rudin}. Further, from \eqref{D+} and \eqref{D-} it is clear that $W^{(q)}\in \cC^1(0,\infty)$ if and only if $\overline{\Pi}\in \cC^0(0,\infty)$. \\
To prove \eqref{derivative} we use induction on $n\in\NN$. 
Let \(n=1\). Then using first Equations \eqref{D+}, \eqref{D-}, then \cite[Thm. 7.17]{rudin}, and Lemma \ref{uniformconv}, and finally \eqref{lem_c}, 
\begin{align*}
\partial_x^2 W^{(q)}(x) &= \partial_x \left(\frac{1}{c} \sum_{k=0}^\infty \int_{[0,x]} \partial_x g_k(s,x) \,\Pi^{\ast k} (\diff s) \right)\\
&= \frac{1}{c} \sum_{k=0}^\infty \partial_x\l( \int_{[0,x]} \partial_x g_k(s,x) \,\Pi^{\ast k} (\diff s)\r)\\
&= \frac{1}{c} \left(\sum_{k=0}^\infty\int_{[0,x]} \partial_x^2 g_k(s,x) \Pi^{\ast k} (\diff s)  + C(1,1) \, \partial \overline{\Pi}(x) \right),
\end{align*}
since $C(k,1)=0$ for all $k>1$. This is \eqref{derivative} for $n=1$. Furthermore it follows immediately from the computation above, that $W^{(q)}$ is twice (continuously) differentiable in $x_0$ if and only if $\overline{\Pi}$ is (continuously) differentiable in $x_0$. Hence $W^{(q)}\in \cC^2(0,\infty)$ if and only if $\overline{\Pi}\in \cC^1(0,\infty)$.\\
Now let $n>1$ and assume that \eqref{derivative} holds for $n-1$. Then by differentiating the two terms separately and assuming existence of all appearing derivatives
\begin{align*}
\partial^{n+1} W^{(q)}(x)&=\frac{1}{c}\l(\partial_x\l( \sum_{k=1}^{n-1}\sum_{j=k}^{n-1}C(k,j) \partial^{n-j} \overline{\Pi}^{*k}(x)\r) +  \partial_x\l(\sum_{k=0}^{\infty}  \int_{[0,x]} \partial_x^n g_k(s,x) ~\diff \overline{\Pi}^{*k}( s)\r)\r),
\end{align*}
where we obtain for the first term simply
\begin{align*}
\partial_x \sum_{k=1}^{n-1}\sum_{j=k}^{n-1}C(k,j) \partial^{n-j} \overline{\Pi}^{*k}(x) = \sum_{k=1}^{n-1}\sum_{j=k}^{n-1}C(k,j) \partial^{n-j+1} \overline{\Pi}^{*k}(x).
\end{align*}
For the second term using \cite[Thm. 7.17]{rudin}, Lemma \ref{uniformconv}, and \eqref{lem_c} as before we obtain
\begin{align*}
\partial_x\sum_{k=0}^{\infty}  \int_{[0,x]} \partial_x^n g_k(s,x) ~\diff \overline{\Pi}^{*k}( s) &=  \sum_{k=0}^{\infty}  \l(   \int_{[0,x]} \partial_x^{n+1} g_k(s,x) ~\diff \overline{\Pi}^{*k}( s)+ C(k,n)\, \partial \overline{\Pi}^{*k}(x) \r)\\
&= \sum_{k=0}^{\infty}    \int_{[0,x]} \partial_x^{n+1} g_k(s,x) ~\diff \overline{\Pi}^{*k}( s)+ \sum_{k=1}^{n}  C(k,n) \,\partial \overline{\Pi}^{*k}(x),
\end{align*}
since the second term vanishes for all but finitely many \(k\in\N\) due to \eqref{propconstants}.
Adding the two derivatives together we finally obtain
\begin{align*}
\partial^{n+1} W^{(q)}(x)&=\frac{1}{c}\l(\sum_{k=1}^{n}\sum_{j=k}^{n}C(k,j) \partial^{n+1-j} \overline{\Pi}^{*k}(x) +  \sum_{k=0}^{\infty}  \int_{[0,x]} \partial_x^{n+1} g_k(s,x) ~\diff \overline{\Pi}^{*k}( s)\r).
\end{align*}
This proves \eqref{derivative} for $n$ and exactly all $x\geq 0$ such that $\partial^{j} \overline{\Pi}^{*k}(x)$ exists for $k= 1,\ldots, n$, $j=1, \ldots, n-k+1$. \\
Whenever $n>1$ and $\overline{\Pi} \in \cC^1(0,\infty)$ then there exists a continuous density $\pi$ of $\overline{\Pi}$ such that $\Pi^{\ast k}(\diff x) = \pi^{\ast k}(x) \diff x$ and $\overline{\Pi}^{\ast k}(x)= \int_0^x \pi^{\ast k}(y) dy$.
 This, however, implies that $\overline{\Pi}^{\ast k}$, $k\geq 1$, is $j$ times (continuously) differentiable in $x_0$ if $\overline{\Pi}$ is $j$ times (continuously) differentiable in $x_0$, since
$$\partial^{j} \overline{\Pi}^{*k}(x_0) = \partial^{j-1} \pi^{\ast k} (x_0) = \pi^{\ast (k-1)} \ast ( \partial^{j-1}\pi ) (x_0),\quad  k\geq 1.$$
Together with the above we observe that in this case $W^{(q)}$ is \((n+1)\) times (continuously) differentiable at \(x_0\geq 0\) if and only if \(\overline{\Pi}\) is \(n\) times (continuously) differentiable at \(x_0\).\\
The final conclusion $W^{(q)}\in \cC^{n+1}(0,\infty)$ if and only if $\overline{\Pi}\in \cC^n(0,\infty)$ is now immediate.
\end{proof}

\subsection{Primitives of q-scale functions}\label{S42}

Considering lattice distributed jumps it is easy to compute the primitives of the \(q\)-scale functions from the representation \eqref{representation} since it consists of only finitely many summands as shown in Lemma \ref{lem_primderiv}. The subsequent approximation arguments which allow for a general formula as given in Proposition \ref{diff_and_int} below, are almost the same as the ones used in Section \ref{S33} and we constrain ourselves on carving out the differences.

\begin{lemma}\label{lem_primderiv}
	In the setting of Proposition \ref{general_lattice} it holds
	\begin{align}\label{prim}
	\int_0^x W^{(q)}(y) \diff y &=\frac{1}{c} \sum_{k=0}^{\infty} \int_{[0,x)}\int_s^x g_k(s,y) \diff y ~\Pi^{*k}(\diff s), \quad x\geq 0.
	\end{align}
\end{lemma}
\begin{proof}
	We prove \eqref{prim} by induction. For \(x\in[0,\varepsilon)\) it holds
	\[
	W^{(q)}(x)= g_0(0,x).
	\]
	Thus, \eqref{prim} is valid for \(x\in[0,\varepsilon)\) and by continuity for \(x=\varepsilon\) as well. Suppose that it holds on \([0,z]\) for some \(z\in\varepsilon\N\) and let \(x\in(z,z+\varepsilon)\). We begin by decomposing
	\[
	\int_0^x W^{(q)}(y) \diff y = \int_0^z W^{(q)}(y) \diff y+\int_z^x W^{(q)}(y) \diff y.
	\]
	Using \eqref{almostready} we may write the second term as
	\begin{align*}
	\int_z^x W^{(q)}(y) \diff y &= \int_z^x \sum_{k=0}^{z/\varepsilon} \int_{[0,z]} g_k(s,y) ~\Pi^{*k}(\diff s) \diff y=  \sum_{k=0}^{z/\varepsilon} \int_{[0,z]} \int_z^x g_k(s,y)\diff y ~\Pi^{*k}(\diff s).
	\end{align*}
 Together with the induction hypothesis we obtain
	\begin{align*}
	\int_0^x W^{(q)}(y) \diff y  &=  \sum_{k=0}^{z/\varepsilon} \int_{[0,z)}\int_s^z g_k(s,y) \diff y ~\Pi^{*k}(\diff s)+\sum_{k=0}^{z/\varepsilon} \int_{[0,z]} \int_z^x g_k(s,y)\diff y ~\Pi^{*k}(\diff s)\\
	&=\sum_{k=0}^{z/\varepsilon}\int_{[0,z]}\int_s^x g_k(s,y) \diff y ~\Pi^{*k}(\diff s) \\
	&= \sum_{k=0}^{\infty}\int_{[0,x)}\int_s^x g_k(s,y) \diff y ~\Pi^{*k}(\diff s).
	\end{align*}
	Thus, \eqref{prim} holds on \([0,z+\varepsilon]\) by continuity and by induction on \(\R_+\).
\end{proof}

The obtained formula is also valid in the general context as shown by the following proposition. 

\begin{proposition}\label{diff_and_int}
	For \(q\geq0\) let \(W^{(q)}\) be the \(q\)-scale function of the spectrally negative compound Poisson process in \eqref{eq-specialtype}. Then its primitive is given by \eqref{prim}.
\end{proposition}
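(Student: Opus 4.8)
The plan is to mimic the three-step approximation strategy already used for Theorem \ref{main} in Section \ref{S33}, since Lemma \ref{lem_primderiv} has already established \eqref{prim} for the dense subclass of lattice-distributed jump measures. Let $(L_t)_{t\geq 0}$ be the process with general jump measure $\Pi$, and choose a sequence $\{\Pi_n\}_{n\in\N}$ of lattice distributions converging weakly to $\Pi$ (possible by Lemma \ref{dense}), with corresponding processes $L^{(n)}$ and scale functions $W_n^{(q)}$. By Lemma \ref{lem_primderiv} the primitive identity holds for each $W_n^{(q)}$. The goal is to pass to the limit on both sides of \eqref{prim}, establishing that the limit of the left-hand side equals the primitive of $W^{(q)}$, and that the limit of the right-hand side equals the candidate expression $\frac1c\sum_{k=0}^\infty\int_{[0,x)}\int_s^x g_k(s,y)\diff y\,\Pi^{*k}(\diff s)$.

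First I would treat the left-hand side. From Lemma \ref{scale_consistent} we already know $W_n^{(q)}(x)\to W^{(q)}(x)$ pointwise, and in fact the convergence is uniform on compact subsets of $\R_+$ (this is exactly what the Moore--Osgood argument via \cite[Thm. 1.7.5]{arendt2011vector} delivered in the proof of Lemma \ref{scale_consistent}). Uniform convergence on the compact interval $[0,x]$ immediately yields
\begin{align*}
\int_0^x W_n^{(q)}(y)\diff y \longrightarrow \int_0^x W^{(q)}(y)\diff y, \quad n\to\infty,
\end{align*}
so the left-hand sides converge to the desired primitive.

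Next I would handle the right-hand side, and this is where the bulk of the work lies, but it parallels the final argument in the proof of Theorem \ref{main} almost verbatim. I would first verify well-definedness of the candidate series for general $\Pi$: setting $G_k(s,x):=\int_s^x g_k(s,y)\diff y$, the bound $\max_{0\leq s\leq x}|g_k(s,x)|<C_1 C_2^k/k!$ from the proof of Theorem \ref{main} gives $\max_{0\leq s\leq x}|G_k(s,x)|\leq x\,C_1 C_2^k/k!$, so the series converges absolutely and uniformly, uniformly in the jump measure since $C_1,C_2$ do not depend on it. Then, for each fixed $k$, weak convergence $\Pi_n^{*k}\to\Pi^{*k}$ (which holds by L\'evy's continuity theorem, exactly as argued for Theorem \ref{main}) together with continuity and boundedness of the integrand $s\mapsto G_k(s,x)$ on $[0,x)$ gives termwise convergence of the integrals. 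Combining the uniform tail bound with $\Pi^{*k}([0,x])\to 0$ as $k\to\infty$, the same three-$\epsilon$ splitting into a finite head, a $\Pi_n$-tail, and a $\Pi$-tail finishes the interchange of limit and summation.

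The main obstacle, and the only point requiring genuine care beyond citing the earlier proof, is the continuity of $s\mapsto G_k(s,x)$ at the upper endpoint and the fact that the integration domain is the half-open interval $[0,x)$ rather than $[0,x]$. Since $G_k(x,x)=\int_x^x g_k(x,y)\diff y=0$, the integrand vanishes at $s=x$, so whether or not $\Pi^{*k}$ charges the point $x$ is immaterial and the weak-convergence step applies cleanly; one should simply remark that $G_k$ is continuous on $[0,x]$ with $G_k(x,x)=0$, which removes any boundary ambiguity. Given this observation, I would write the proof as a short paragraph pointing to Lemma \ref{lem_primderiv} for the lattice case, invoking the uniform-convergence conclusion of Lemma \ref{scale_consistent} for the left-hand side, and referring to the limit-interchange argument in the proof of Theorem \ref{main} for the right-hand side, noting the single modification that $g_k$ is replaced by its antiderivative $G_k$, which satisfies the same type of exponential-in-$k$ bound.
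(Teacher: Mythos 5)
Your proposal is correct and follows essentially the same route as the paper: the paper likewise cites Lemma \ref{lem_primderiv} for the lattice case, deduces convergence of $\int_0^x W_n^{(q)}(y)\diff y$ from the uniform-on-compacts convergence in Lemma \ref{scale_consistent}, and disposes of well-definedness and consistency of the right-hand side by referring back verbatim to the proof of Theorem \ref{main}. Your extra observation that $G_k(s,x)=\int_s^x g_k(s,y)\diff y$ vanishes at $s=x$, so the half-open domain $[0,x)$ causes no trouble in the weak-convergence step, is a detail the paper leaves implicit but is entirely consistent with its argument.
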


\begin{proof}
	As mentioned above, the proof of \eqref{prim} for arbitrary jump distributions is almost identical to the approximation part of the proof of Theorem \ref{main}. Indeed, it remains to prove that weak convergence of the jump measures \(\{\Pi_n\}_{n\in\N}\) implies pointwise convergence of the respective functions \(\int W_n^{(q)}(y) \diff y\) since both well-definedness and consistency can be shown in exactly the same way as for \(W^{(q)}\) itself.\\
	Let \(L, W^{(q)}\) and \(L^{(n)}, W^{(q)}_n, n\in\N\) be as in Section \ref{S33}. By Lemma \ref{scale_consistent} we already know that \(W^{(q)}_n \underset{n\to\infty} \longrightarrow W^{(q)}\) uniformly on any compact subset of \(\R_+\). That, however, immediately implies
	\begin{align*}
	\lim_{n\to\infty} \int_0^x W_n^{(q)}(y) \diff y = \int_0^x W^{(q)}(y) \diff y
	\end{align*}	 
	for \(x\geq0\) by dominated convergence, and hence \eqref{prim} holds for arbitrary jump measures.
\end{proof}

\subsection{Further representations}\label{S43}

	The formulae for $W^{(q)}$ in Theorem \ref{main}, its directional first derivatives in Proposition \ref{smoothness} and its primitive  in Proposition \ref{diff_and_int} share the form
	\begin{align*}
	V(x):=\sum_{k=0}^{\infty} \frac{\lambda^k}{k!} \int_0^\infty f_k(s,x) ~\Pi^{*k}(\diff s)
	\end{align*}
	for some functions \(f_k\). Since for \(k\in\N\) it holds \(\sum_{i=0}^{k}\xi_i \sim \Pi^{*k}\) and \(\P(N_{t}=k)=\frac{(\lambda t)^k}{k!}\e^{-\lambda t}\) it thus follows that
	\begin{align*}
	V(x)&= \e^{\lambda t}\sum_{k=0}^\infty \E\l[t^{-k}f_{N_t}\l(\sum_{i=0}^{N_t}\xi_i,x\r)~\middle|~N_t=k \r]\P(N_t=k) =\e^{\lambda t}\E\l[t^{-N_t}f_{N_t}\l(\sum_{i=0}^{N_t}\xi_i ,x\r)\r].
	\end{align*}
	For example, to obtain a representation for the $q$-scale function itself we have to choose $$f(s,x)=\frac{1}{c} \frac{k!}{\lambda^k} g_k(s,x) \mathds{1}_{[0,x]}(s) =\frac{1}{c^{k+1}} \e^{-\frac{q+\lambda}{c}(s-x)} (s-x)^k \mathds{1}_{[0,x]}(s) $$ which yields 
	\begin{align*}
	W^{(q)}(x)&= \frac{\e^{\lambda t}}{c} \EE\left[ (c t)^{-N_t} \e^{-\frac{q+\lambda}{c}(\sum_{i=0}^{N_t}\xi_i-x)} \l(\sum_{i=0}^{N_t}\xi_i-x\r)^{N_t}  \mathds{1}_{\sum_{i=0}^{N_t}\xi_i \leq x} \right]\\	
	&=\frac{\e^{\lambda t}}{c} \E^x \l[ \e^{-\frac{q+\lambda}{c}(c t-L_t)}\l(\frac{c t-L_t}{c t}\r)^{N_t} \bone_{\{L_t \geq c t \}} \r],
	\end{align*}
	for every \(q,x,t\geq0\). In particular we may choose $t=1$ to get
	\begin{equation}\label{eq-qscaleexpvalue}
	W^{(q)}(x) = \frac{\e^{-q}}{c} \E^x\l[ \e^{\frac{q+\lambda}{c} L_1}\l(1-\frac{L_1}{c}\r)^{N_1} \bone_{\{L_1\geq c\}} \r], \quad q,x\geq 0.
	\end{equation} 
	
	The following corollary collects the corresponding representations for the derivatives and primitives of the $q$-scale functions. The proof works as the one given for \eqref{eq-qscaleexpvalue} and is omitted.

\begin{corollary}\label{expected_values}
	For \(q\geq0\) let \(W^{(q)}\) be the \(q\)-scale function of the spectrally negative compound Poisson process in \eqref{eq-specialtype}, then for all $x\geq 0$
	\begin{align*}
	\partial_{+} W^{(q)}(x)&= \frac{\e^{-q}}{c^2} \E^x\l[\e^{\frac{q+\lambda}{c} L_1} \l(1-\frac{L_1}{c}\r)^{N_{1}-1}\l( (q+\lambda)\l(1-\frac{L_1}{c}\r) - N_1\r)\bone_{\{L_1 \geq c \}}\r],\\
	\partial_{-} W^{(q)}(x)&=\frac{\e^{-q}}{c^2} \E^x\l[\e^{\frac{q+\lambda}{c} L_1} \l(1-\frac{L_1}{c}\r)^{N_{1}-1}\l( (q+\lambda)\l(1-\frac{L_1}{c}\r) - N_1\r)\bone_{\{L_1 > c \}}\r], \\
	\int_0^x W^{(q)}(y)\diff y &= \e^\lambda \E^x\l[ \int_0^{\frac{L_1}{c}-1} \e^{(q+\lambda)y} (-y)^{N_1} \diff y \bone_{\{L_1 >c\}}  \r].\\
	\end{align*}
\end{corollary}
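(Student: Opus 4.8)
The plan is to treat all three formulae with the single device already used for \eqref{eq-qscaleexpvalue}. Each of the representations \eqref{D+}, \eqref{D-} and \eqref{prim} is of the generic form $V(x)=\sum_{k=0}^\infty \frac{\lambda^k}{k!}\int_0^\infty f_k(s,x)\,\Pi^{\ast k}(\diff s)$ for a suitable family $f_k$, and for any such $V$ one has, for every $t>0$,
\[
V(x)=\e^{\lambda t}\,\EE\l[t^{-N_t}f_{N_t}\l(\sum_{i=0}^{N_t}\xi_i,\,x\r)\r],
\]
since conditionally on $\{N_t=k\}$ the sum $\sum_{i=0}^{N_t}\xi_i$ has law $\Pi^{\ast k}$ and $\PP(N_t=k)=\frac{(\lambda t)^k}{k!}\e^{-\lambda t}$. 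For each of the three identities I would therefore read off the associated $f_k$, substitute it into this formula, and specialise to $t=1$, at which point under $\PP^x$ we have $\sum_{i}\xi_i-x=c-L_1$ and $N_t=N_1$.

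For the one-sided derivatives I would begin from \eqref{D+}--\eqref{D-} and insert the explicit form of $\partial_x g_k$ supplied by Lemma \ref{diffg} with $n=1$. Factoring out $(s-x)^{k-1}$ gives
\[
f_k(s,x)=\frac{1}{c^{k+1}}\e^{-\frac{q+\lambda}{c}(s-x)}(s-x)^{k-1}\l(\tfrac{q+\lambda}{c}(s-x)-k\r)\mathds{1}_{[0,x]}(s)
\]
for the right derivative, and the same expression with $\mathds{1}_{[0,x)}(s)$ for the left one. Plugging this in with $t=1$ and using $s-x=c-L_1$, the prefactor $\e^{\lambda}\e^{-(q+\lambda)}=\e^{-q}$ emerges, the powers of $c$ collapse to $c^{-2}$ (because $(c-L_1)^{N_1-1}=c^{N_1-1}(1-L_1/c)^{N_1-1}$), and $\tfrac{q+\lambda}{c}(c-L_1)$ becomes $(q+\lambda)(1-L_1/c)$; the closed domain $\mathds{1}_{[0,x]}(s)$ turns into $\mathds{1}_{\{L_1\geq c\}}$ while the half-open domain $\mathds{1}_{[0,x)}(s)$ turns into $\mathds{1}_{\{L_1>c\}}$, which is precisely the only discrepancy between the two displayed formulae.

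For the primitive I would start from \eqref{prim} and first evaluate the inner integral by the substitution $u=y-s$, obtaining $\int_s^x g_k(s,y)\,\diff y=\frac{(\lambda/c)^k}{k!}(-1)^k\int_0^{x-s}\e^{\frac{q+\lambda}{c}u}u^k\,\diff u$ and hence
\[
f_k(s,x)=\frac{(-1)^k}{c^{k+1}}\int_0^{x-s}\e^{\frac{q+\lambda}{c}u}u^k\,\diff u\,\mathds{1}_{[0,x)}(s).
\]
Inserting this with $t=1$ and using $x-s=L_1-c$ turns the upper limit into $L_1-c$ and the indicator into $\mathds{1}_{\{L_1>c\}}$; a final change of variables $u=cy$ rescales the bound to $L_1/c-1$, converts $\e^{\frac{q+\lambda}{c}u}$ into $\e^{(q+\lambda)y}$, and lets the factors $c^{k+1}$ and the sign $(-1)^{N_1}$ recombine into $(-y)^{N_1}$, yielding the stated expectation.

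The computations are routine; the care is entirely in the bookkeeping. The main obstacle is tracking the open versus closed integration ranges so that the indicators $\{L_1\geq c\}$ and $\{L_1>c\}$ appear correctly, and managing the substitution in the primitive so that the powers of $c$ and the sign $(-1)^{N_1}$ collapse exactly into $(-y)^{N_1}$. One should also verify that interchanging the infinite sum with the expectation is admissible: the bound $\max_{0\le s\le x}|g_k(s,x)|\le C_1 C_2^k/k!$ from the proof of Theorem \ref{main}, together with its obvious analogues for $\partial_x g_k$ and for $\int_s^x g_k(s,y)\,\diff y$, gives absolute convergence uniformly over the probability mass of $\Pi^{\ast k}$, so Fubini's theorem applies throughout.
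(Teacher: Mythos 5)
Your proposal is correct and follows exactly the route the paper intends: the paper states that the proof "works as the one given for \eqref{eq-qscaleexpvalue} and is omitted," and you have supplied precisely those omitted computations — reading off $f_k$ from \eqref{D+}, \eqref{D-} and \eqref{prim}, applying the conditioning identity with $t=1$, and tracking the substitutions $s-x=c-L_1$ (resp.\ $x-s=L_1-c$, $u=cy$) together with the closed/half-open integration ranges that produce $\{L_1\geq c\}$ versus $\{L_1>c\}$. All the bookkeeping (the prefactor $\e^{-q}$, the collapse of the powers of $c$, and the absorption of $(-1)^{N_1}$ into $(-y)^{N_1}$) checks out.
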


\section{Examples}\label{S5}
\setcounter{equation}{0}

	As discussed in \cite{hubalek2010} it is of great use to have examples of spectrally negative Lévy processes at hand for which the \(q\)-scale functions can be computed explicitly. In this section we apply our results to expand the library of those cases.\\
	In the following let \((L_t)_{t\geq0}\) be as in \eqref{eq-specialtype}. We begin by presenting two discrete examples for which the convolutions are easy to find, namely geometrically distributed and zero-truncated Poisson distributed jumps.

	\begin{example}[Geometrically distributed jumps]
	For fixed \(p\in(0,1)\) let 
$\Pi(\cdot)= \sum_{n=1}^\infty (1-p)^{n-1}p\, \delta_{n}(\cdot)$
	be a geometric distribution, then it is well-known that the measure \(\Pi^{*k}\) describes a negative binomial distribution, i.e.
	\begin{align*}
	\Pi^{\ast k}(\cdot)=\sum_{n=k}^\infty { n-1 \choose n-k} (1-p)^{n-k}p^k\, \delta_n(\cdot), \quad k\in \NN.
	\end{align*}
	Hence for \(q\geq0\) the \(q\)-scale function of \((L_t)_{t\geq0}\) follows from Theorem \ref{main} as 
	\begin{align*}
	W^{(q)}(x)&= \frac{1}{c} \sum_{k=0}^{\infty} \int_{[0,x]} g_k(s,x) ~\Pi^{*k}(\diff s) 
	= \frac{1}{c} \sum_{k=0}^{\lfloor x \rfloor} \sum_{n=k}^{\lfloor x \rfloor} g_k(n,x) { n-1 \choose n-k} (1-p)^{n-k}p^k \\
	&= \frac{1}{c}  \sum_{k=0}^{\lfloor x \rfloor} \sum_{n=k}^{\lfloor x \rfloor}\frac{(\lambda/c)^k }{k!} { n-1 \choose n-k} \e^{-\frac{q+\lambda}{c}(n-x)} (1-p)^{n-k}(p(n-x))^k,\qquad x\geq 0.
	\end{align*}
	\end{example} 
	
	\begin{example}[Zero-truncated Poisson distributed jumps]
	Here we denote by \(\Pi\) the distribution of a Poisson random variable \(\xi\) with parameter \(\mu>0\), given \(\xi>0\). To apply Theorem \ref{main} the probability mass functions \(f_k\) of \(\Pi^{*k}, k\in\N\), are needed. For \(k\neq0\) they are given by
	\begin{align}\label{ztp}
	f_k(n)=\frac{\P\l(\sum_{i=1}^k \xi_i = n \cap \forall i \leq k : \xi_i>0 \r)}{\P\l(\forall i \leq k : \xi_i >0\r)}=: \frac{z(n,k)}{(1-\e^{-\mu})^k} , \quad n\in\NN\setminus\{0\},
	\end{align}
	where \(\xi_i, i\in\N\setminus\{0\}\) are i.i.d. copies of \(\xi\). For \(k=0\) we have by definition \(\Pi^{*0}=\delta_0\). The numerator \(z(n,k)\) in \eqref{ztp} can be computed recursively as follows. Obviously it holds \(z(n,k)=0\) if \(n<k\in\N\setminus\{0\}\) and \(z(n,1)=\frac{\mu^s}{s!}\e^{-\mu}\) for \(n\in\N\setminus\{0\}\). Everywhere else we have
	\begin{align*}
	z(n,k)&=\P\Bigg(\sum_{i=1}^k  \xi_i = n\Bigg)- \sum_{\ell=1}^{k-1}{k\choose \ell} \P\Bigg(\sum_{i=1}^{\ell} \xi_i =0\Bigg)z(n,k-\ell)\\
	&= \frac{(k\mu)^n}{n!}\e^{-k\mu} - \sum_{\ell=1}^{k-1} {k \choose \ell}\e^{-\ell\mu}z(n,k-\ell).
	\end{align*}
	Further setting $z(0,0)=1$ and $z(n,0)=0, n\in\NN\setminus\{0\}$, the $q$-scale functions of $(L_t)_{t\geq 0}$ are then given by
	$$W^{(q)}(x)=\frac{1}{c}  \sum_{k=0}^{\lfloor x \rfloor} \sum_{n=k}^{\lfloor x \rfloor}\frac{(\lambda/c)^k }{k!} \e^{-\frac{q+\lambda}{c}(n-x)} \l(\frac{n-x}{1-\e^{-\mu}}\r)^k z(n,k),\qquad x\geq 0.$$
	\end{example} 
	
	This section would be incomplete without illustrating the applicability of our results for continuous jump distributions. In \cite{Egami2010PhasetypeFO} explicit representations of the $q$-scale functions have been derived for Lévy processes with phase-type distributed jumps, thus including compound Poisson processes with Erlang distributed jumps. Theorem \ref{main} now admits a representation for those processes where the jumps follow an arbitrary Gamma distribution.
	
	\begin{example}[Gamma distributed jumps]
	For \(i=1,2,\ldots\) let \(\xi_i\sim\Gamma(\alpha,\beta)\) with parameters \(\alpha,\beta>0\), i.e. it holds
	\[
	\Pi(\diff s)= \frac{\beta^\alpha}{\Gamma(\alpha)}s^{\alpha-1}\e^{-\beta s} \bone_{\{s\geq0\}}\diff s,
	\]
and 
	\[
	\Pi^{*k}(\diff s)=\frac{\beta^{k\alpha}}{\Gamma(k\alpha)}s^{k\alpha-1}\e^{-\beta s} \bone_{\{s\geq0\}} \diff s, \quad k\in\N\setminus\{0\}, 
	\]
	and \(\Pi^{*0}=\delta_0\). Applying Theorem \ref{main} to obtain the \(q\)-scale function of \((L_t)_{t\geq0}\) we compute
	\begin{align*}
	W^{(q)}(x)&=\e^{\frac{q+\lambda}{c } x}+\sum_{k=1}^{\infty} \int_0^x \frac{(\lambda/c)^k}{k!}\e^{-\frac{q+\lambda}{c }(s-x)}(s-x)^k \frac{\beta^{k\alpha}}{\Gamma(k\alpha)} s^{k\alpha-1}\e^{-\beta s}  \diff s \\
	&=\e^{\frac{q+\lambda}{c } x}\Big( 1+\sum_{k=1}^{\infty}\frac{(\lambda/c)^k}{k!}\frac{\beta^{k\alpha}}{\Gamma(k\alpha)} \int_0^x \e^{-(\frac{q+\lambda}{c} +\beta)s}(s-x)^k s^{k\alpha-1} \diff s\Big).
	\end{align*}
	In the following we denote \(C_k:=\frac{(\lambda/c)^k}{k!}\frac{\beta^{k\alpha}}{\Gamma(k\alpha)}\) and \(\rho:=\frac{q+\lambda}{c}+\beta\). Applying the binomial theorem we obtain
	\begin{align*}
	W^{(q)}(x)&=\e^{\frac{q+\lambda}{c} x}\l(1+\sum_{k=1}^{\infty}C_k \sum_{\ell=0}^{k}{k\choose \ell}(-x)^{k-\ell}\int_0^x \e^{-\rho s} s^{k\alpha+\ell-1} \diff s\r) \\
	&=\e^{\frac{q+\lambda}{c} x}\l(1+\sum_{k=1}^{\infty}C_k \sum_{\ell=0}^{k}{k\choose \ell}(-x)^{k-\ell} \rho^{-(k\alpha+\ell)}\gamma(k\alpha+\ell,\rho x)\r), \quad x\geq 0,
	\end{align*}
	where \(\gamma(\cdot,\cdot)\) is the lower incomplete gamma function.
	\end{example}

 \bibliography{scalefunction}
	  
 \end{document}